\theoremstyle{plain}
\newtheorem{thm}{Theorem}
\newtheorem{lem}{Lemma}
\newtheorem{cor}{Corollary}
\newtheorem{prop}{Proposition}
\theoremstyle{definition}
\newtheorem{dfn}{Definition}
\newtheorem{ex}{Example}
\theoremstyle{remark}
\newtheorem{rem}{Remark}
\newtheorem{opp}{Open Problem}
\newtheorem*{ackn}{Acknowledgment}
\title{Polynomial identities satisfied by generalized polynomials}
\author{Eszter Gselmann}
\begin{document}
\maketitle
 \begin{abstract}
 The main purpose of this paper is solve polynomial equations that are satisfied by 
 (generalized) polynomials. More exactly, we deal with the following problem: 
 let $\mathbb{F}$ be a field with $\mathrm{char}(\mathbb{F})=0$ and $P\in \mathbb{F}[x]$ and $Q\in \mathbb{C}[x]$ be polynomials. Our aim is to prove characterization theorems for generalized polynomials $f\colon \mathbb{F}\to \mathbb{C}$ of degree two that also fulfill equation 
\[
 f(P(x))= Q(f(x))
\]
for each $x\in \mathbb{F}$. 
As it turns out, the difficulty of such problems heavily depends on that we consider the above equation for generalized polynomials or for (normal) polynomials. Therefore, firstly we study the connection between these two notions. 
\end{abstract}

\begin{quotation}
\begin{center}
 This paper is dedicated to Professors \emph{György Gát} and \emph{Zsolt Páles} on the occasion of their $60$\textsuperscript{th} and $65$\textsuperscript{th} birthday, respectively. 
 \end{center}
\end{quotation}

\section{Introduction and preliminaries}

\subsection{Homomorphisms and derivations}

The study of additive mappings from a ring into another ring which
preserve squares was initiated by G.~Ancochea in \cite{Anc42} in
connection with problems arising in projective geometry. Later,
these results were strengthened by (among others) Kaplansky
\cite{Kap} and Jacobson--Rickart \cite{JR}.

 Let $R, R'$ be rings, the mapping $\varphi:R\rightarrow R'$ is called a \emph{homomorphism} if
\[
 \varphi(a+b)=\varphi(a)+\varphi(b)
\qquad \left(a, b\in R\right)
\]
and
\[
 \varphi(ab)=\varphi(a)\varphi(b)
\qquad \left(a, b\in R\right).
\]
Furthermore, the function $\varphi:R\to R'$ is an
\emph{anti-homomorphism} if
\[
 \varphi(a+b)=\varphi(a)+\varphi(b)
\qquad \left(a, b\in R\right)
\]
and
\[
 \varphi(ab)=\varphi(b)\varphi(a)
\qquad \left(a, b\in R\right).
\]

Henceforth, $\mathbb{N}$ will denote the set of the positive
integers. Let $n\in\mathbb{N}, n\geq 2$ be fixed. 
The function $\varphi:R\rightarrow R'$ is called an $n$-Jordan
homomorphism if
\[
\varphi(a+b)=\varphi(a)+\varphi(b) \qquad \left(a, b\in R\right)
\]
and
\[
 \varphi(a^{n})=\varphi(a)^{n}
\qquad \left(a\in R\right).
\]
Finally, we remark that in case $n=2$ we speak about homomorphisms
and Jordan homomorphisms, respectively. It was G.~Ancochea who
firstly dealt with the connection of Jordan homomorphisms and
homomorphisms, see \cite{Anc42}. These results were
generalized and extended in several ways, see for instance
\cite{JR}, \cite{Kap}, \cite{Zel68}.

Let $n\in\mathbb{N}$, we say that a ring $R$ is \emph{of
characteristic larger than $n$} if $n!x=0$ implies that $x=0$. The
ring $R$ is termed to be a \emph{prime ring} if
$ a, b\in R$ and $aRb=\left\{0\right\}$ imply that either $a=0$ or $b=0$. 
In \cite{Her} I.N.~Herstein proved that 
if $\varphi$ is a Jordan homomorphism of a ring $R$
\emph{onto} a prime ring $R'$ of characteristic different from $2$
and $3$, then either $\varphi$ is a homomorphism or an
anti-homomorphism.

Furthermore, in the above-mentioned paper \cite{Her}, 
not only Jordan homomorphisms but also $n$-Jordan
mappings were considered and he proved the following statement. 
\begin{thm}
Let $\varphi$ be an $n$-Jordan
homomorphism from a ring $R$ \textbf{onto} a prime ring $R'$ and assume that $R'$ has 
characteristic larger than $n$, suppose further that $R$ has a unit
element. Then $\varphi=\varepsilon\tau$ where $\tau$ is either a
homomorphism or an anti-homomorphism and $\varepsilon$  is an
$(n-1)$st root of unity lying in the center of $R'$.
\end{thm}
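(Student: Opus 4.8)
The plan is to polarize the power identity $\varphi(a^{n})=\varphi(a)^{n}$, to use the characteristic hypothesis to strip off the integer coefficients that arise, and then to normalize $\varphi$ by the element $\varphi(1)$ so as to reduce the whole problem to the Jordan case $n=2$, where the quoted theorem on Jordan homomorphisms onto prime rings applies. Concretely, write
\[
\Delta(a_{1},\dots,a_{n})=\sum_{\sigma\in S_{n}}a_{\sigma(1)}a_{\sigma(2)}\cdots a_{\sigma(n)}
\]
for the full polarization of $x\mapsto x^{n}$. Replacing $a$ successively by sums $a_{1}+\dots+a_{k}$ and inducting on the number of distinct variables yields
\[
\varphi\bigl(\Delta(a_{1},\dots,a_{n})\bigr)=\Delta\bigl(\varphi(a_{1}),\dots,\varphi(a_{n})\bigr)\qquad(a_{1},\dots,a_{n}\in R),
\]
where at each step one has to cancel positive integers, all of which divide $n!$; this is exactly the point at which one uses that $R'$ has characteristic larger than $n$ (so that every such integer is cancellable).

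Next I would feed the unit element of $R$ into the polarized identity. Since $1$ is central in $R$, putting $a_{1}=a$ and $a_{2}=\dots=a_{n}=1$ makes the left-hand side collapse to $n!\,a$, while with $\varepsilon:=\varphi(1)$ the right-hand side becomes $(n-1)!\sum_{j=0}^{n-1}\varepsilon^{j}\varphi(a)\varepsilon^{n-1-j}$; cancelling $(n-1)!$ gives
\[
n\,\varphi(a)=\sum_{j=0}^{n-1}\varepsilon^{j}\varphi(a)\,\varepsilon^{n-1-j}\qquad(a\in R),
\]
and, by surjectivity, the same relation with $\varphi(a)$ replaced by an arbitrary $r\in R'$. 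Taking $a=1$ gives $\varepsilon^{n}=\varepsilon$, and if $\varepsilon=0$ the displayed relation forces $\varphi\equiv0$, contradicting that $\varphi$ is onto $R'\neq\{0\}$; hence $\varepsilon\neq0$. I expect the technical heart of the proof to be the next step: deducing, from this relation and its consequences obtained by substituting sums and products, together with the primeness of $R'$, that $\varepsilon$ is central. The mechanism should be to show that the commutator $[\varepsilon,r]$ satisfies an annihilation identity of the shape $\varepsilon R'[\varepsilon,r]=\{0\}$, whence primeness and $\varepsilon\neq0$ force $[\varepsilon,r]=0$ for every $r\in R'$.

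Granting that $\varepsilon$ lies in the centre of $R'$, the relation $\varepsilon^{n}=\varepsilon$ shows that $e:=\varepsilon^{n-1}$ is a central idempotent; in a prime ring a nonzero central idempotent $e$ satisfies $eR'(x-ex)=\{0\}$, hence is a left identity and, being central, a two-sided identity. Thus $R'$ is unital with $1=\varepsilon^{n-1}$, $\varepsilon$ is invertible with $\varepsilon^{-1}=\varepsilon^{n-2}$, and $\varepsilon$ is an $(n-1)$st root of unity in the centre. Now set $\tau:=\varepsilon^{-1}\varphi$; it is additive, $\tau(1)=1$, and using the centrality of $\varepsilon$ together with $\varepsilon^{n-1}=1$ one checks $\tau(a^{n})=\tau(a)^{n}$, so $\tau$ again satisfies the polarized identity. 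Substituting $a_{1}=a$, $a_{2}=b$, $a_{3}=\dots=a_{n}=1$ and using $\tau(1)=1$ collapses that identity to
\[
\tau(ab+ba)=\tau(a)\tau(b)+\tau(b)\tau(a)\qquad(a,b\in R),
\]
which, as the characteristic of $R'$ is larger than $n\geq2$, is equivalent to $\tau(a^{2})=\tau(a)^{2}$: that is, $\tau$ is a Jordan homomorphism of $R$ onto the prime ring $R'$. For $n\geq3$ the characteristic of $R'$ exceeds $6$ and so differs from $2$ and $3$, whence Herstein's theorem on Jordan homomorphisms onto prime rings gives that $\tau$ is a homomorphism or an anti-homomorphism (and for $n=2$ the assertion is that theorem itself, with $\varepsilon=1$). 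Since $\varphi=\varepsilon\tau$, this is the claimed decomposition.
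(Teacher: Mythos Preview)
The paper does not prove this theorem at all: it is quoted verbatim from Herstein's 1956 paper \cite{Her} as background material in the introduction, with no proof supplied. There is therefore no ``paper's own proof'' to compare your attempt against.

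That said, your outline is essentially Herstein's original argument, and the reductions you carry out (polarization of $a\mapsto a^{n}$, specialization at the unit to obtain $n\,r=\sum_{j}\varepsilon^{j}r\,\varepsilon^{n-1-j}$ for all $r\in R'$, passage to $\tau=\varepsilon^{-1}\varphi$, and the final collapse to the Jordan identity) are correct. The one genuine gap you yourself flag is the centrality of $\varepsilon=\varphi(1)$: you write that you ``expect'' an annihilation identity of the form $\varepsilon R'[\varepsilon,r]=\{0\}$ to emerge, but you do not derive it. This is the substantive step; everything downstream (the central idempotent $\varepsilon^{n-1}$, invertibility of $\varepsilon$, the reduction to $\tau(ab+ba)=\tau(a)\tau(b)+\tau(b)\tau(a)$) depends on it and is routine once it is in hand. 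A minor additional point: for $n=2$ you invoke the Jordan theorem as stated in the paper, which asks for characteristic different from both $2$ and $3$, whereas the hypothesis ``characteristic larger than $2$'' only excludes $2$; you should either note that the Jordan result in fact needs only characteristic $\neq 2$, or treat $n=2$ separately.
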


Besides homomorphisms, derivations also play a key role in the theory of rings and fields. Concerning this notion, we will follow \cite[Chapter 14]{Kuc}. 

Let $Q$ be a ring and let $P$ be a subring of $Q$.
A function $d\colon P\rightarrow Q$ is called a \emph{derivation}\index{derivation} if it is additive,
i.e. 
\[
d(x+y)=d(x)+d(y)
\quad
\left(x, y\in P\right)
\]
and also satisfies the so-called \emph{Leibniz rule}\index{Leibniz rule}, i.e.  equation
\[
d(xy)=d(x)y+xd(y)
\quad
\left(x, y\in P\right). 
\]

Fundamental examples for derivations are the following ones.

Let $\mathbb{F}$ be a field, and let in the above definition $P=Q=\mathbb{F}[x]$
be the ring of polynomials with coefficients from $\mathbb{F}$. For a polynomial
$p\in\mathbb{F}[x]$, $p(x)=\sum_{k=0}^{n}a_{k}x^{k}$, define the function
$f\colon \mathbb{F}[x]\rightarrow\mathbb{F}[x]$ as
\[
f(p)=p',
\]
where $p'(x)=\sum_{k=1}^{n}ka_{k}x^{k-1}$ is the derivative of the polynomial $p$.
Then the function $f$ clearly fulfills
\[
f(p+q)=f(p)+f(q)
\]
and
\[
f(pq)=pf(q)+qf(p)
\]
for all $p, q\in\mathbb{F}[x]$. Hence $f$ is a derivation.

 Let $(\mathbb{F}, +, \cdot)$ be a field, and suppose that we are given a derivation 
 $f\colon \mathbb{F}\to \mathbb{F}$. We define the mapping $f_{0}\colon \mathbb{F}[x]\to \mathbb{F}[x]$ in the following way. 
If $p\in \mathbb{F}[x]$ has the form 
\[
 p(x)=\sum_{k=0}^{n}a_{k}x^{k}, 
\]
then let 
\[
 f_{0}(p)= p^{f}(x)=\sum_{k=0}^{n}f(a_{k})x^{k}. 
\]
Then $f_{0}\colon \mathbb{F}[x]\to \mathbb{F}[x]$ is a derivation.

It is well-known that in case of additive functions, Hamel bases play an important role. 
As \cite[Theorem 14.2.1]{Kuc} shows in case of derivations, algebraic bases are fundamental. 

\begin{thm}\label{T14.2.1}
Let $(\mathbb{K}, +,\cdot)$ be a field of characteristic zero, let $(\mathbb{F}, +,\cdot)$
be a subfield of $(\mathbb{K}, +,\cdot)$, let $S$ be an algebraic base of $\mathbb{K}$ over $\mathbb{F}$,
if it exists, and let $S=\emptyset$ otherwise.
Let $f\colon \mathbb{F}\to \mathbb{K}$ be a derivation.
Then, for every function $u\colon S\to \mathbb{K}$,
there exists a unique derivation $g\colon \mathbb{K}\to \mathbb{K}$
such that $g \vert_{\mathbb{F}}=f$ and $g \vert_{S}=u$.
\end{thm}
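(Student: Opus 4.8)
The plan is to split the construction into two stages mirroring the structure of $S$ as a transcendence base of $\mathbb{K}$ over $\mathbb{F}$: first I would extend $f$ from $\mathbb{F}$ to the rational function field $\mathbb{F}(S)$, where the values on $S$ may be prescribed freely, and then extend from $\mathbb{F}(S)$ to $\mathbb{K}$, where — since $\mathbb{K}$ is algebraic over $\mathbb{F}(S)$ — the extension is completely \emph{forced}. When no nonempty algebraic base is present, i.e. $\mathbb{K}$ is algebraic over $\mathbb{F}$, only the second stage is needed, with $\mathbb{F}(S)$ replaced by $\mathbb{F}$.

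\emph{Stage 1 (transcendental part).} Because the elements of $S$ are algebraically independent over $\mathbb{F}$, the subring $\mathbb{F}[S]\subseteq\mathbb{K}$ is a genuine polynomial ring, so each of its elements is uniquely a finite sum $\sum_{\alpha}a_{\alpha}s^{\alpha}$ with $a_{\alpha}\in\mathbb{F}$ and $s^{\alpha}$ a monomial in finitely many members of $S$. I would define
\[
g\Bigl(\sum_{\alpha}a_{\alpha}s^{\alpha}\Bigr)=\sum_{\alpha}f(a_{\alpha})s^{\alpha}+\sum_{s\in S}\Bigl(\sum_{\alpha}a_{\alpha}\,\tfrac{\partial}{\partial s}s^{\alpha}\Bigr)u(s),
\]
i.e. the sum of the coefficientwise derivation $p\mapsto p^{f}$ and the formal partial derivatives weighted by the values $u(s)$ (the outer sum being finite). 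Additivity is clear, and the Leibniz rule need only be checked on monomials. Then I would extend $g$ to the quotient field $\mathbb{F}(S)$ by the quotient rule $g(p/q)=\bigl(g(p)q-pg(q)\bigr)/q^{2}$; a short computation shows this is well defined and a derivation, with $g|_{\mathbb{F}}=f$ and $g|_{S}=u$.

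\emph{Stage 2 (algebraic part).} Put $L=\mathbb{F}(S)$, so that $\mathbb{K}$ is algebraic — hence, as $\mathrm{char}(\mathbb{K})=0$, separable algebraic — over $L$. The crucial observation is that if $D$ is any derivation of a field containing $L$ that extends $g$, and $\alpha$ is algebraic over $L$ with minimal polynomial $m(x)=\sum_{i}c_{i}x^{i}\in L[x]$, then applying $D$ to $m(\alpha)=0$ yields $m^{g}(\alpha)+m'(\alpha)D(\alpha)=0$, where $m^{g}$ differentiates the coefficients and $m'$ is the formal derivative; since $m$ is irreducible and separable, $m'(\alpha)\neq 0$, so $D(\alpha)=-m^{g}(\alpha)/m'(\alpha)$ is forced. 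This gives uniqueness and also shows how to build the extension: via $L(\alpha)\cong L[x]/(m)$ and the above formula one checks that $g$ extends uniquely to each simple algebraic extension $L(\alpha)$, hence to each finite subextension of $\mathbb{K}/L$; since $\mathbb{K}$ is the union of these and the extensions are pairwise compatible by uniqueness, a Zorn's lemma (or direct limit) argument produces the desired $g$ on all of $\mathbb{K}$.

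Combining the two stages gives a derivation $g\colon\mathbb{K}\to\mathbb{K}$ with $g|_{\mathbb{F}}=f$ and $g|_{S}=u$, and uniqueness follows because the derivation axioms already determine $g$ on the field $\mathbb{F}(S)$ generated by $\mathbb{F}\cup S$, after which Stage 2 forces the rest. The hard part is the existence half of Stage 2: verifying that $D(\alpha)=-m^{g}(\alpha)/m'(\alpha)$ genuinely defines a derivation on $L(\alpha)$ (independence of the chosen representative modulo $m$, and the Leibniz rule), and then pushing the one-step construction through a possibly transfinite tower of algebraic extensions while preserving compatibility. Stage 1 is routine; the only mild care there is the well-definedness of the passage to the quotient field.
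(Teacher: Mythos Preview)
The paper does not supply its own proof of this theorem: it is quoted verbatim as Theorem~14.2.1 of Kuczma \cite{Kuc} and used as background. So there is no in-paper argument to compare against.

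That said, your two-stage outline is exactly the standard proof (and is, up to presentation, the one in Kuczma): freely prescribe the derivation on a transcendence base to get an extension to the purely transcendental intermediate field $\mathbb{F}(S)$, then use separability in characteristic zero to force the unique lift through each simple algebraic extension via $D(\alpha)=-m^{g}(\alpha)/m'(\alpha)$, and patch these using Zorn. The points you flag as needing care---well-definedness of the quotient-rule extension, well-definedness and the Leibniz rule for the induced map on $L[x]/(m)$, and compatibility across the tower---are precisely the places where the work lies, and none of them hides a genuine obstacle. Your sketch is correct.
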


\subsection{The symmetrization method}

While proving our results, the so-called Polarization formula for multi-additive functions and the symmetrization method will play a key role. In this subsection the most important notations and statements are summarized. Here we follow the monograph \cite{Sze91}.

\begin{dfn}
 Let $G, S$ be commutative semigroups, $n\in \mathbb{N}$ and let $A\colon G^{n}\to S$ be a function.
 We say that $A$ is \emph{$n$-additive} if it is a homomorphism of $G$ into $S$ in each variable.
 If $n=1$ or $n=2$ then the function $A$ is simply termed to be \emph{additive}
 or \emph{bi-additive}, respectively.
\end{dfn}

The \emph{diagonalization} or \emph{trace} of an $n$-additive
function $A\colon G^{n}\to S$ is defined as
 \[
  A^{\ast}(x)=A\left(x, \ldots, x\right)
  \qquad
  \left(x\in G\right).
 \]
As a direct consequence of the definition each $n$-additive function
$A\colon G^{n}\to S$ satisfies
\[
 A(x_{1}, \ldots, x_{i-1}, kx_{i}, x_{i+1}, \ldots, x_n)
 =
 kA(x_{1}, \ldots, x_{i-1}, x_{i}, x_{i+1}, \ldots, x_{n})
 \qquad 
 \left(x_{1}, \ldots, x_{n}\in G\right)
\]
for all $i=1, \ldots, n$, where $k\in \mathbb{N}$ is arbitrary. The
same identity holds for any $k\in \mathbb{Z}$ provided that $G$ and
$S$ are groups, and for $k\in \mathbb{Q}$, provided that $G$ and $S$
are linear spaces over the rationals. For the diagonalization of $A$
we have
\[
 A^{\ast}(kx)=k^{n}A^{\ast}(x)
 \qquad
 \left(x\in G\right).
\]

The above notion can also be extended for the case $n=0$ by letting 
$G^{0}=G$ and by calling $0$-additive any constant function from $G$ to $S$. 

One of the most important theoretical results concerning
multiadditive functions is the so-called \emph{Polarization
formula}, that briefly expresses that every $n$-additive symmetric
function is \emph{uniquely} determined by its diagonalization under
some conditions on the domain as well as on the range. Suppose that
$G$ is a commutative semigroup and $S$ is a commutative group. The
action of the {\emph{difference operator}} $\Delta$ on a function
$f\colon G\to S$ is defined by the formula
\[\Delta_y f(x)=f(x+y)-f(x)
\qquad
\left(x, y\in G\right). \]
Note that the addition in the argument of the function is the
operation of the semigroup $G$ and the subtraction means the inverse
of the operation of the group $S$.

\begin{thm}[Polarization formula]\label{Thm_polarization}
 Suppose that $G$ is a commutative semigroup, $S$ is a commutative group, $n\in \mathbb{N}$.
 If $A\colon G^{n}\to S$ is a symmetric, $n$-additive function, then for all
 $x, y_{1}, \ldots, y_{m}\in G$ we have
 \[
  \Delta_{y_{1}, \ldots, y_{m}}A^{\ast}(x)=
  \left\{
  \begin{array}{rcl}
   0 & \text{ if} & m>n \\
   n!A(y_{1}, \ldots, y_{m}) & \text{ if}& m=n.
  \end{array}
  \right.
 \]

\end{thm}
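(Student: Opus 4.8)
The plan is to reduce the whole statement to one elementary combinatorial cancellation. First I would record that iterated difference operators expand into an alternating sum: by induction on $m$, for \emph{any} $f\colon G\to S$ and any $x,y_{1},\ldots,y_{m}\in G$,
\[
\Delta_{y_{1},\ldots,y_{m}}f(x)=\sum_{I\subseteq\{1,\ldots,m\}}(-1)^{m-|I|}\,f\Bigl(x+\sum_{i\in I}y_{i}\Bigr),
\]
where the empty inner sum is read as $x$; the base case $m=1$ is the definition of $\Delta$, and the induction step splits the subsets of $\{1,\ldots,m\}$ according to whether they contain $m$. (This also shows that the operators $\Delta_{y_{i}}$ commute, so $\Delta_{y_{1},\ldots,y_{m}}$ is well defined; and since every argument that occurs is $x$ together with a possibly empty batch of the $y_{i}$, no neutral element is ever needed, which matters because $G$ is only a semigroup.) Applying this with $f=A^{\ast}$ and then expanding $A^{\ast}$ of each sum by $n$-additivity in each of its $n$ slots gives
\[
A^{\ast}\Bigl(x+\sum_{i\in I}y_{i}\Bigr)=\sum_{\psi\colon\{1,\ldots,n\}\to\{0\}\cup I}A\bigl(z_{\psi(1)},\ldots,z_{\psi(n)}\bigr),
\]
where $z_{0}=x$ and $z_{i}=y_{i}$; this expansion is legitimate over a semigroup because the letter set $\{0\}\cup I$ is never empty.

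Substituting and interchanging the two finite summations, I would group the resulting terms by the slot assignment $\psi\colon\{1,\ldots,n\}\to\{0,1,\ldots,m\}$: such a $\psi$ occurs in the inner sum for precisely those $I$ containing $T_{\psi}:=\mathrm{im}(\psi)\setminus\{0\}$, so its total coefficient is
\[
\sum_{T_{\psi}\subseteq I\subseteq\{1,\ldots,m\}}(-1)^{m-|I|}=(-1)^{m-|T_{\psi}|}\sum_{J\subseteq\{1,\ldots,m\}\setminus T_{\psi}}(-1)^{|J|},
\]
and the inner alternating sum over all subsets of a set of size $m-|T_{\psi}|$ equals $(1-1)^{m-|T_{\psi}|}$, hence vanishes unless $|T_{\psi}|=m$ — that is, unless $\psi$ hits every one of $y_{1},\ldots,y_{m}$ — in which case the coefficient is $1$. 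If $m>n$ there is no room for $n$ slots to cover $m$ distinct letters, so every coefficient is $0$ and $\Delta_{y_{1},\ldots,y_{m}}A^{\ast}(x)=0$. If $m=n$, the surviving $\psi$ are exactly the bijections of the $n$ slots onto $\{1,\ldots,n\}$, i.e.\ the permutations $\sigma\in S_{n}$, each contributing $A(y_{\sigma(1)},\ldots,y_{\sigma(n)})$ with coefficient $1$; by symmetry of $A$ each of these equals $A(y_{1},\ldots,y_{n})$, and summing over the $n!$ permutations yields $\Delta_{y_{1},\ldots,y_{n}}A^{\ast}(x)=n!\,A(y_{1},\ldots,y_{n})$.

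The only genuine content is the vanishing of $\sum_{J}(-1)^{|J|}$ over the subsets of a nonempty set; the rest is bookkeeping with finite index sets and a single appeal to symmetry. An alternative would be a direct induction that peels off one operator $\Delta_{y}$ at a time, but that forces one to drag along an inductive hypothesis about partially diagonalized lower-order multiadditive functions and is appreciably messier, so I would favour the expansion above. The main (and rather mild) obstacle is therefore just setting up the double sum cleanly and justifying the interchange — both sums being finite, there is nothing analytic to check.
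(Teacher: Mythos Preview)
Your argument is correct. The paper itself does not supply a proof of this theorem: it is quoted in the preliminaries section as a known result taken from Sz\'ekelyhidi's monograph \cite{Sze91}, so there is no ``paper's own proof'' to compare against. What you have written is essentially the classical proof one finds in that source --- expand the iterated difference as an alternating sum over subsets, expand $A^{\ast}$ of each shifted argument by $n$-additivity, and collapse everything via the binomial identity $\sum_{J}(-1)^{|J|}=0$ over a nonempty index set. Your care with the semigroup setting (never needing a neutral element because the base point $x$ is always present) is a nice touch that is often glossed over.
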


\begin{cor}
 Suppose that $G$ is a commutative semigroup, $S$ is a commutative group, $n\in \mathbb{N}$.
 If $A\colon G^{n}\to S$ is a symmetric, $n$-additive function, then for all $x, y\in G$
 \[
  \Delta^{n}_{y}A^{\ast}(x)=n!A^{\ast}(y).
\]
\end{cor}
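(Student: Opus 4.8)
The plan is to obtain this as an immediate specialization of the Polarization formula (Theorem \ref{Thm_polarization}). First I would apply that theorem in the case $m=n$, and then set $y_{1}=y_{2}=\cdots=y_{n}=y$ for an arbitrary fixed $y\in G$. Since the iterated mixed difference operator with all increments equal to $y$ is, by definition (and because the operators $\Delta_{y_{i}}$ commute), nothing but the $n$-th power $\Delta_{y}^{n}$, the left-hand side $\Delta_{y_{1},\ldots,y_{n}}A^{\ast}(x)$ collapses to $\Delta_{y}^{n}A^{\ast}(x)$.

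Next I would read off the right-hand side. For $m=n$, Theorem \ref{Thm_polarization} yields $n!\,A(y_{1},\ldots,y_{n})$, and after the substitution $y_{1}=\cdots=y_{n}=y$ this becomes $n!\,A(y,\ldots,y)=n!\,A^{\ast}(y)$ by the very definition of the diagonalization $A^{\ast}$. Equating the two sides gives $\Delta_{y}^{n}A^{\ast}(x)=n!\,A^{\ast}(y)$ for all $x,y\in G$, which is exactly the assertion.

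I do not expect any genuine obstacle here; the corollary is a one-line consequence of the already-established polarization identity, the only point worth a remark being the identification $\Delta_{y,\ldots,y}=\Delta_{y}^{n}$. If one preferred a self-contained argument not invoking the mixed version, one could instead argue by induction on $n$, writing $\Delta_{y}^{n}A^{\ast}=\Delta_{y}^{n-1}\bigl(\Delta_{y}A^{\ast}\bigr)$ and using that $\Delta_{y}A^{\ast}(x)$ decomposes into $n\,A(y,x,\ldots,x)$ plus terms of lower "degree" in $x$ that are annihilated by the remaining $n-1$ differences; but the direct deduction from Theorem \ref{Thm_polarization} is the cleaner route and is the one I would present.
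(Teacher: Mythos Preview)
Your proposal is correct and matches the paper's approach: the corollary is stated immediately after Theorem~\ref{Thm_polarization} without a separate proof precisely because it is the specialization $m=n$, $y_{1}=\cdots=y_{n}=y$ that you describe. The only thing to note is the identification $\Delta_{y,\ldots,y}=\Delta_{y}^{n}$, which you already flag.
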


\begin{lem}
\label{mainfact}
  Let $n\in \mathbb{N}$ and suppose that the multiplication by $n!$ is surjective in the commutative semigroup $G$ or injective in the commutative group $S$. Then for any symmetric, $n$-additive function $A\colon G^{n}\to S$, $A^{\ast}\equiv 0$ implies that
  $A$ is identically zero, as well.
\end{lem}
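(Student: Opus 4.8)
The plan is to derive everything from the Polarization formula (Theorem~\ref{Thm_polarization}). Applying it with $m=n$ to the symmetric, $n$-additive map $A$ yields
\[
 n!\,A(y_{1}, \ldots, y_{n}) = \Delta_{y_{1}, \ldots, y_{n}}A^{\ast}(x)
 \qquad
 \left(x, y_{1}, \ldots, y_{n}\in G\right).
\]
Since $A^{\ast}\equiv 0$, the right-hand side vanishes identically, so $n!\,A(y_{1}, \ldots, y_{n})=0$ for all $y_{1}, \ldots, y_{n}\in G$. Thus the entire content of the lemma is the possibility of cancelling the factor $n!$, which is precisely what the two alternative hypotheses provide.

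First, suppose multiplication by $n!$ is injective in $S$. Then from $n!\,A(y_{1}, \ldots, y_{n})=0=n!\cdot 0_{S}$ we immediately conclude $A(y_{1}, \ldots, y_{n})=0_{S}$ for every choice of arguments, i.e. $A\equiv 0$.

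Second, suppose multiplication by $n!$ is surjective in $G$. Fix $y_{1}, \ldots, y_{n}\in G$ arbitrarily and pick $z_{1}\in G$ with $n!\,z_{1}=y_{1}$. Using that $A$ is a homomorphism of $G$ into $S$ in its first variable, together with the identity $A(kx_{1}, x_{2}, \ldots, x_{n})=kA(x_{1}, x_{2}, \ldots, x_{n})$ for $k\in\mathbb{N}$ recorded earlier in the excerpt, we get
\[
 A(y_{1}, y_{2}, \ldots, y_{n}) = A(n!\,z_{1}, y_{2}, \ldots, y_{n}) = n!\,A(z_{1}, y_{2}, \ldots, y_{n}),
\]
and the last expression equals $0$ by the first paragraph applied to the tuple $(z_{1}, y_{2}, \ldots, y_{n})$. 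Hence $A\equiv 0$ in this case as well.

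I do not expect a serious obstacle here once the Polarization formula is in hand; the only points that warrant a little care are that in the surjective case one only needs divisibility in a single slot (so symmetry is not even essential for that half of the argument), and that the scaling identity $A(\ldots, kx_{i}, \ldots)=kA(\ldots, x_{i}, \ldots)$ for $k\in\mathbb{N}$ already holds at the level of commutative semigroups, so no group structure on $G$ is implicitly required.
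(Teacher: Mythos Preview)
The paper does not supply its own proof of this lemma; it is quoted in the preliminaries as a standard consequence of the Polarization formula (essentially a result from \cite{Sze91}). Your argument is exactly the standard one and is correct: the Polarization formula gives $n!\,A(y_1,\ldots,y_n)=\Delta_{y_1,\ldots,y_n}A^{\ast}(x)=0$, and then either injectivity of multiplication by $n!$ in $S$ cancels the factor directly, or surjectivity in $G$ lets you rewrite each argument as $n!\,z_i$ and pull the factor out of $A$. Your closing observations (that only one slot needs divisibility, and that the semigroup-level scaling identity suffices) are accurate and nicely isolate the minimal hypotheses actually used.
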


\begin{dfn}
 Let $G$ and $S$ be commutative semigroups, a function $p\colon G\to S$ is called a \emph{generalized polynomial} from $G$ to $S$ if it has a representation as the sum of diagonalizations of symmetric multi-additive functions from $G$ to $S$. In other words, a function $p\colon G\to S$ is a 
 generalized polynomial if and only if, it has a representation 
 \[
  p= \sum_{k=0}^{n}A^{\ast}_{k}, 
 \]
where $n$ is a nonnegative integer and $A_{k}\colon G^{k}\to S$ is a symmetric, $k$-additive function for each 
$k=0, 1, \ldots, n$. In this case we also say that $p$ is a generalized polynomial \emph{of degree at most $n$}. 

Let $n$ be a nonnegative integer, functions $p_{n}\colon G\to S$ of the form 
\[
 p_{n}= A_{n}^{\ast}, 
\]
where $A_{n}\colon G^{n}\to S$ are the so-called \emph{generalized monomials of degree $n$}. 
\end{dfn}

\begin{rem}
 Obviously, generalized monomials 
of degree $0$ are constant functions and generalized monomials of degree $1$ are additive functions. 

 Furthermore, generalized monomials of degree $2$ will be termed \emph{quadratic functions}. 
\end{rem}

\subsection{Polynomial functions}

As Laczkovich \cite{Lac04} enlightens, on groups there are several polynomial notions. One of them is that we introduced in subsection 1.2, that is the notion of generalized polynomials. 
As we will see in the forthcoming sections, not only this notion, but also that of \emph{(normal) polynomials} will be important. The definitions and results recalled here can be found in \cite{Sze91}. 

Throughout this subsection $G$ is assumed to be  a commutative group (written additively).

\begin{dfn}
{\it Polynomials} are elements of the algebra generated by additive
functions over $G$. Namely, if $n$ is a positive integer,
$P\colon\mathbb{C}^{n}\to \mathbb{C}$ is a (classical) complex
polynomial in
 $n$ variables and $a_{k}\colon G\to \mathbb{C}\; (k=1, \ldots, n)$ are additive functions, then the function
 \[
  x\longmapsto P(a_{1}(x), \ldots, a_{n}(x))
 \]
is a polynomial and, also conversely, every polynomial can be
represented in such a form.
\end{dfn}

\begin{rem}
 For the sake of easier distinction, at some places polynomials will be called normal polynomials. 
\end{rem}

\begin{rem}
 We recall that the elements of $\mathbb{N}^{n}$ for any positive integer $n$ are called
 ($n$-dimensional) \emph{multi-indices}.
 Addition, multiplication and inequalities between multi-indices of the same dimension are defined component-wise.
 Further, we define $x^{\alpha}$ for any $n$-dimensional multi-index $\alpha$ and for any
 $x=(x_{1}, \ldots, x_{n})$ in $\mathbb{C}^{n}$ by
 \[
  x^{\alpha}=\prod_{i=1}^{n}x_{i}^{\alpha_{i}}
 \]
where we always adopt the convention $0^{0}=1$. We also use the
notation $\left|\alpha\right|= \alpha_{1}+\cdots+\alpha_{n}$. With
these notations any polynomial of degree at most $N$ on the
commutative semigroup $G$ has the form
\[
 p(x)= \sum_{\left|\alpha\right|\leq N}c_{\alpha}a(x)^{\alpha}
 \qquad
 \left(x\in G\right),
\]
where $c_{\alpha}\in \mathbb{C}$ and $a=(a_1, \dots, a_n) \colon
G\to \mathbb{C}^{n}$ is an additive function. Furthermore, the
\emph{homogeneous term of degree $k$} of $p$ is
\[
 \sum_{\left|\alpha\right|=k}c_{\alpha}a(x)^{\alpha} .
\]
\end{rem}

\begin{lem}[Lemma 2.7 of \cite{Sze91}]\label{L_lin_dep}
 Let $G$ be a commutative group,
 $n$ be a positive integer and let
 \[
  a=\left(a_{1}, \ldots, a_{n}\right),
 \]
where $a_{1}, \ldots, a_{n}$ are linearly independent complex valued
additive functions defined on $G$. Then the monomials
$\left\{a^{\alpha}\right\}$ for different multi-indices are linearly
independent.
\end{lem}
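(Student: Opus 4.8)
The plan is to recast the claim in polynomial form: it suffices to show that if $P\in\mathbb{C}[z_1,\dots,z_n]$ satisfies $P(a_1(x),\dots,a_n(x))=0$ for every $x\in G$, then $P$ is the zero polynomial. Indeed, any linear combination $\sum_{\alpha}c_{\alpha}a^{\alpha}$ of the monomials is precisely a function of the form $x\mapsto P(a_1(x),\dots,a_n(x))$ with $P=\sum_{\alpha}c_{\alpha}z^{\alpha}$, so its vanishing as a function on $G$ forces $P=0$, i.e. all $c_{\alpha}=0$. Alongside this I would record the geometric meaning of the hypothesis: writing $H=\{(a_1(x),\dots,a_n(x))\colon x\in G\}$, additivity of the $a_i$ makes $H$ a subgroup of $(\mathbb{C}^n,+)$, and the $\mathbb{C}$-linear independence of $a_1,\dots,a_n$ is equivalent to the statement that no nonzero linear functional on $\mathbb{C}^n$ vanishes identically on $H$, that is, that $H$ spans $\mathbb{C}^n$ over $\mathbb{C}$.

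The core of the argument is a minimal-degree/translation trick. Suppose, for contradiction, that some nonzero $P$ vanishes on $H$, and choose such a $P$ of least possible total degree. Since $P(0,\dots,0)=P(a_1(0),\dots,a_n(0))=0$, $P$ cannot be a nonzero constant, so $\deg P\ge 1$. For any fixed $y\in G$, additivity gives $P(z+(a_1(y),\dots,a_n(y)))=0$ whenever $z\in H$; hence the polynomial $z\mapsto P(z+(a_1(y),\dots,a_n(y)))-P(z)$ also vanishes on $H$, while its total degree is strictly smaller than $\deg P$, because the homogeneous parts of top degree cancel. Minimality then forces this difference to be the zero polynomial, i.e. $P$ is invariant under translation by $(a_1(y),\dots,a_n(y))$ for every $y\in G$; equivalently, $P(z+h)=P(z)$ identically in $z$ for every $h\in H$.

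To finish, I would exploit periodicity together with the spanning property. Choose $h_1,\dots,h_n\in H$ that form a $\mathbb{C}$-basis of $\mathbb{C}^n$, which is possible by the reformulation of the hypothesis, and set $\widetilde{P}(t_1,\dots,t_n)=P(t_1h_1+\cdots+t_nh_n)$, again a polynomial. The previous step yields $\widetilde{P}(t+e_j)=\widetilde{P}(t)$ for each standard basis vector $e_j$ of $\mathbb{C}^n$. A one-variable polynomial invariant under $t\mapsto t+1$ agrees with a constant at all nonnegative integers, hence is constant; applying this coordinate by coordinate shows that $\widetilde{P}$ is constant, so $P$ is constant as well, contradicting $\deg P\ge 1$. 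This contradiction shows no nonzero $P$ vanishes on $H$, which is exactly the asserted linear independence.

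I expect the only genuinely delicate point to be the bookkeeping in the minimal-degree step: making sure the translated difference $P(z+h)-P(z)$ really drops in total degree, and that ``a polynomial of least degree among the nonzero ones vanishing on $H$'' is a legitimate choice. The reduction of multivariate periodicity to the one-variable statement via the change of variables supplied by the basis $h_1,\dots,h_n$ is then routine, as is the translation of linear independence into the spanning condition on $H$.
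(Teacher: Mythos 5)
Your argument is correct. Note that the paper itself does not prove this statement: it is quoted verbatim as Lemma 2.7 of Sz\'ekelyhidi's monograph, so there is no in-paper proof to compare against. What you have supplied is a clean, self-contained proof that the subgroup $H=\{(a_{1}(x),\ldots,a_{n}(x)) : x\in G\}$ of $(\mathbb{C}^{n},+)$ is Zariski-dense once it spans $\mathbb{C}^{n}$ linearly, which is exactly the content of the lemma after the (correct) reformulation ``a linear relation among the $a^{\alpha}$ is a polynomial $P$ vanishing on $H$, and distinct monomials $z^{\alpha}$ are independent in $\mathbb{C}[z_{1},\ldots,z_{n}]$.'' All the steps you flag as delicate do go through: for $h\in H$ the difference $z\mapsto P(z+h)-P(z)$ vanishes on $H$ because $H$ is a subgroup, its total degree drops since the top homogeneous parts cancel, and minimality of $\deg P$ then forces it to be the zero polynomial, so the periodicity $P(z+h)=P(z)$ holds for all $z\in\mathbb{C}^{n}$ (not merely on $H$), which is what the change of variables $t\mapsto t_{1}h_{1}+\cdots+t_{n}h_{n}$ requires; the one-variable periodic-polynomial-is-constant step is immediate from having infinitely many equal values. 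The only cosmetic remark is that the equivalence ``$a_{1},\ldots,a_{n}$ linearly independent $\Leftrightarrow$ $H$ spans $\mathbb{C}^{n}$'' deserves one explicit line (a nonzero functional $\sum c_{i}z_{i}$ vanishes on $H$ iff $\sum c_{i}a_{i}\equiv 0$), but you state it accurately. This is at least as elementary as the textbook treatment and can stand on its own.
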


\begin{dfn}
A function $m\colon G\to \mathbb{C}$ is called an \emph{exponential}
function if it satisfies
\[
 m(x+y)=m(x)m(y)
 \qquad
 \left(x,y\in G\right).
\]
Furthermore, on an  \emph{exponential polynomial} we mean a linear
combination of functions of the form $p \cdot m$, where $p$ is a
polynomial and $m$ is an exponential function.
\end{dfn}

\begin{dfn}
 Let $G$ be an Abelian group and $V\subseteq \mathbb{C}^G$ a set of functions. We say that $V$ is {\it translation invariant} if for every $f\in V$ the function $\tau_{g}f\in V$ also holds for all $g\in G$, where
 \[
  \tau_{g}f(h)= f(h+g)
  \qquad
  \left(h\in G\right).
 \]
 \end{dfn}

 In view of Theorem 10.1 of Sz\'ekelyhidi \cite{Sze91}, any finite dimensional translation invariant linear 
 space of complex valued functions on a commutative group consists of exponential polynomials. 
 This implies that if $G$ is a commutative group, then any function 
 $f\colon G\to \mathbb{C}$, satisfying the functional equation 
 \[
  f(x+y)= \sum_{i=1}^{n}g_{i}(x)h_{i}(y) 
  \qquad 
  \left(x, y\in G\right)
 \]
for some positive integer $n$ and functions $g_{i}, h_{i}\colon G\to \mathbb{C}$ ($i=1, \ldots, n$), 
is an exponential polynomial of degree at most $n$.

This enlightens the connection between generalized polynomials and polynomials. It is easy to see that 
each polynomial, that is, any function of the form 
\[
  x\longmapsto P(a_{1}(x), \ldots, a_{n}(x)), 
 \]
where $n$ is a positive integer,
$P\colon\mathbb{C}^{n}\to \mathbb{C}$ is a (classical) complex
polynomial in
 $n$ variables and $a_{k}\colon G\to \mathbb{C}\; (k=1, \ldots, n)$ are additive functions, is a generalized polynomial. The converse however is in general not true. A complex-valued generalized polynomial $p$ defined on a commutative group $G$ is a polynomial \emph{if and only if} its variety (the linear space spanned by its translates) is of \emph{finite} dimension. 
To make the situation more clear, here we also recall Theorem 13.4 from Sz\'ekelyhidi \cite{Sze14}. 
 
 \begin{thm}\label{thm_torsion}
  The torsion free rank of a commutative group is finite \emph{if and only if} every generalized polynomial on the group is a polynomial. 
 \end{thm}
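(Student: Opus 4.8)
The plan is to reduce everything to $\mathbb{Q}$-linear algebra on the rationalization $\mathbb{Q}\otimes_{\mathbb{Z}}G$. Recall that the torsion free rank of $G$ is the cardinality of a maximal subset of $G$ whose elements are linearly independent modulo the torsion subgroup; such a subset $\{g_i\}_{i\in I}$ has the property that $\{1\otimes g_i\}_{i\in I}$ is a $\mathbb{Q}$-basis of $\mathbb{Q}\otimes_{\mathbb{Z}}G$, so $|I|$ equals the torsion free rank. For each $i\in I$ let $q_i\colon G\to\mathbb{Q}$ be the $i$-th coordinate functional of $\mathbb{Q}\otimes_{\mathbb{Z}}G$ (with respect to this basis) precomposed with the canonical map $g\mapsto 1\otimes g$. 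Then each $q_i$ is additive, $q_i(g_j)=\delta_{ij}$, the family $\{q_i\}_{i\in I}$ is linearly independent over $\mathbb{C}$ as functions on $G$, and — the point that makes the construction below work — for every fixed $g\in G$ all but finitely many of the numbers $q_i(g)$ vanish, since $1\otimes g$ is a finite $\mathbb{Q}$-combination of the basis elements.

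For the implication that finite torsion free rank forces every generalized polynomial to be a polynomial, assume $|I|=r<\infty$. Because $\mathbb{C}$ is torsion free and divisible, every additive map $G\to\mathbb{C}$ factors through $\mathbb{Q}\otimes_{\mathbb{Z}}G$ as a $\mathbb{Q}$-linear functional, hence is a $\mathbb{C}$-linear combination of $q_1,\dots,q_r$. Applying this in each variable, every symmetric $k$-additive $A_k\colon G^k\to\mathbb{C}$ extends to a symmetric $\mathbb{Q}$-$k$-linear map on $(\mathbb{Q}\otimes_{\mathbb{Z}}G)^k$; writing $1\otimes x=\sum_i q_i(x)(1\otimes g_i)$ and expanding by multilinearity gives
\[
 A_k(x_1,\dots,x_k)=\sum_{i_1,\dots,i_k=1}^{r}c_{i_1\cdots i_k}\,q_{i_1}(x_1)\cdots q_{i_k}(x_k)
 \qquad(x_1,\dots,x_k\in G)
\]
with $c_{i_1\cdots i_k}\in\mathbb{C}$. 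Hence $A_k^{\ast}(x)$ is an ordinary polynomial in $q_1(x),\dots,q_r(x)$, and for a generalized polynomial $p=\sum_{k=0}^{n}A_k^{\ast}$ we obtain $p(x)=P(q_1(x),\dots,q_r(x))$ with $P\in\mathbb{C}[x_1,\dots,x_r]$, i.e. $p$ is a (normal) polynomial.

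For the converse I argue by contraposition: if $|I|$ is infinite I exhibit a quadratic function on $G$ that is not a polynomial. Since only finitely many $q_i(x)$ are nonzero for each $x$, the formula $B(x,y)=\sum_{i\in I}q_i(x)q_i(y)$ is a finite sum for every pair $(x,y)$, so it defines a symmetric bi-additive function $B\colon G\times G\to\mathbb{C}$, and its trace $B^{\ast}$ is a generalized monomial of degree $2$, in particular a generalized polynomial. A direct computation using $q_i(x+g_j)=q_i(x)+\delta_{ij}$ yields $\tau_{g_j}B^{\ast}=B^{\ast}+2q_j+1$, whence $\tau_{g_j}B^{\ast}-\tau_{g_k}B^{\ast}=2(q_j-q_k)$ for all $j,k\in I$. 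Thus the variety of $B^{\ast}$, being the linear span of its translates, contains all the functions $q_j-q_k$; fixing $k$ and letting $j$ range over $I\setminus\{k\}$, the linear independence of $\{q_i\}_{i\in I}$ shows these are infinitely many linearly independent functions, so the variety of $B^{\ast}$ is infinite dimensional. By the characterization recalled above — a generalized polynomial is a polynomial precisely when its variety is finite dimensional — $B^{\ast}$ is not a polynomial, completing the contrapositive.

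The step that needs genuine care, and where the obvious attempt breaks down, is the construction in the converse direction: one cannot form the trace of the ``sum of squares'' of an arbitrary infinite linearly independent family of additive functions, because that sum need not be pointwise finite and so need not define a function at all. The reason for passing to the coordinate functionals $q_i$ of $\mathbb{Q}\otimes_{\mathbb{Z}}G$ is exactly that they have locally finite support, which makes $B$ well defined while keeping the infinitely many $q_i$ independent. The remaining ingredients — the factorization of (multi)additive $\mathbb{C}$-valued maps through $\mathbb{Q}\otimes_{\mathbb{Z}}G$ and the elementary evaluation of the translates of $B^{\ast}$ — are routine.
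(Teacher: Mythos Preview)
The paper does not prove this theorem at all: it is quoted verbatim as Theorem 13.4 of Sz\'ekelyhidi \cite{Sze14} and used as a black box, so there is no in-paper argument to compare your proposal against.

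That said, your proof is correct and self-contained. The forward direction is clean: factoring (multi)additive maps through the finite-dimensional $\mathbb{Q}$-space $\mathbb{Q}\otimes_{\mathbb{Z}}G$ and expanding in the coordinate functionals $q_1,\dots,q_r$ immediately writes every $A_k^{\ast}$ as a classical polynomial in $q_1,\dots,q_r$. For the converse, the locally finite ``sum of squares'' $B^{\ast}(x)=\sum_{i\in I}q_i(x)^2$ is a nice explicit witness; your translate computation $\tau_{g_j}B^{\ast}-\tau_{g_k}B^{\ast}=2(q_j-q_k)$ together with the independence of the $q_i$ shows the variety is infinite dimensional, and then you correctly invoke the finite-variety characterization of normal polynomials stated in the paper just before the theorem. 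Your closing remark about why one must use the coordinate functionals (rather than an arbitrary infinite independent family of additive maps) is well placed and identifies the only delicate point.
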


\section{Results}

In this section $\mathbb{F}$ is assumed to be a field with $\mathrm{char}(\mathbb{F})=0$.  
Let further $n$ be a positive integer and $P\in \mathbb{F}[x]$ and $Q\in \mathbb{C}[x]$ be polynomials. Our aim is to prove characterization theorems for generalized polynomials $f\colon \mathbb{F}\to \mathbb{C}$ of degree at most $n$ that also fulfill equation 
\[
 f(P(x))= Q(f(x))
\]
for each $x\in \mathbb{F}$.

Before presenting the results of this paper, we note that related problems have already been considered by Z.~Boros and E.~Garda--M\'{a}ty\'{a}s in \cite{BorGar18, BorGar20, Gar19} and also by M.~Amou in \cite{Amo20}. In these papers the authors consider monomial functions $f, g\colon\mathbb{R}\to \mathbb{R}$ of degree $n$, where $n\in \mathbb{N}$, $n\geq 2$, which satisfy the conditional equation 
\[
 x^{n}f(y)=y^{n}g(x)
\]
for all points $(x, y)$ on a specified planar curve. 

Roughly speaking, the following lemmata tell us that the problem investigated in this paper is meaningful in the sense that for any positive integer $n$, there \emph{do exist} generalized polynomials of degree at most $n$ that satisfy the above identity.

\begin{lem}\label{lemma3}
 Let $\mathbb{F}$ be a field with $\mathrm{char}(\mathbb{F})=0$, $n$ be a positive integer and 
 $\varphi_{1}, \ldots, \varphi_{n}\colon \mathbb{F}\to \mathbb{C}$ be homomorphisms. Define the function $f$ on 
 $\mathbb{F}$ by 
 \[
  f(x)= \varphi_{1}(x)\cdots \varphi_{n}(x)
  \qquad 
  \left(x\in \mathbb{F}\right). 
 \]
Then the following statements hold true. 
\begin{enumerate}[(i)]
 \item The function $f\colon \mathbb{F}\to \mathbb{C}$ is a generalized polynomial of degree $n$. 
 \item The function $f\colon \mathbb{F}\to \mathbb{C}$ is a polynomial of degree $n$.
 \item For any positive integer $k$, we have 
 \[
  f(x^{k})=f(x)^{k} 
  \qquad 
  \left(x\in \mathbb{F}\right). 
 \] 
\end{enumerate}
\end{lem}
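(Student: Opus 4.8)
The plan is to handle the three items essentially separately, with item (i) carrying the real content and item (ii) then being read off from it.

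For (i), I would exhibit the symmetric $n$-additive function whose trace is $f$ by symmetrizing the obvious product. Put
\[
 A(x_{1}, \ldots, x_{n}) = \frac{1}{n!}\sum_{\sigma\in S_{n}} \varphi_{1}(x_{\sigma(1)})\cdots \varphi_{n}(x_{\sigma(n)})
 \qquad \left(x_{1}, \ldots, x_{n}\in \mathbb{F}\right).
\]
This maps into $\mathbb{C}$, where $n!$ is invertible, so it is well defined; each $\varphi_{i}$ being additive, $A$ is additive in every variable, and it is symmetric by construction. Since multiplication in $\mathbb{C}$ is commutative, for each fixed $x\in\mathbb{F}$ every summand equals $\varphi_{1}(x)\cdots\varphi_{n}(x)$, so $A^{\ast}(x)=f(x)$. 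Hence $f$ is a generalized monomial of degree $n$. Because a nonzero ring homomorphism between fields is unital, $\varphi_{i}(1)=1$, so $f(1)=1\neq 0$ and therefore $A\not\equiv 0$; by Lemma \ref{mainfact} (char zero makes multiplication by $n!$ injective in $\mathbb{C}$) and the resulting uniqueness of the homogeneous decomposition, the degree is \emph{exactly} $n$.

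For (ii), the definition of a (normal) polynomial is matched verbatim: take the classical complex polynomial $P(y_{1}, \ldots, y_{n})=y_{1}\cdots y_{n}$, of degree $n$, and the additive functions $\varphi_{1}, \ldots, \varphi_{n}$, so that $f(x)=P(\varphi_{1}(x), \ldots, \varphi_{n}(x))$; thus $f$ is a polynomial. For the exact degree I would invoke that a normal polynomial of degree $d$ is in particular a generalized polynomial of degree $d$ (its homogeneous terms are traces of symmetric multi-additive functions), so that part (i) together with the uniqueness of the degree of a generalized polynomial forces $d=n$. If one prefers to argue inside the polynomial formalism, group the $\varphi_{i}$ by equality, write $f=\psi_{1}^{r_{1}}\cdots\psi_{m}^{r_{m}}$ with distinct homomorphisms $\psi_{j}$ and $r_{1}+\cdots+r_{m}=n$; distinct homomorphisms into $\mathbb{C}$ are linearly independent, so Lemma \ref{L_lin_dep} shows the degree-$n$ monomial $\psi^{(r_{1}, \ldots, r_{m})}$ is a nonzero function, i.e. the homogeneous term of degree $n$ of $f$ does not vanish.

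For (iii), this is a direct computation. Multiplicativity of each $\varphi_{i}$ gives $\varphi_{i}(x^{k})=\varphi_{i}(x)^{k}$ for every positive integer $k$ (induction on $k$), and then
\[
 f(x^{k}) = \prod_{i=1}^{n}\varphi_{i}(x^{k}) = \prod_{i=1}^{n}\varphi_{i}(x)^{k} = \left(\prod_{i=1}^{n}\varphi_{i}(x)\right)^{k} = f(x)^{k}
 \qquad \left(x\in\mathbb{F}\right),
\]
using commutativity of $\mathbb{C}$. The only genuinely delicate point in the whole argument is pinning down the word \emph{exactly} in (i) and (ii); the cleanest route is the uniqueness of the homogeneous (generalized) polynomial decomposition resting on Lemma \ref{mainfact}, with the linear-independence-of-characters argument via Lemma \ref{L_lin_dep} available as a fallback. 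Everything else is bookkeeping with additivity and multiplicativity.
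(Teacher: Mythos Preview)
Your proof is correct and follows essentially the same approach as the paper: the same symmetrized product for (i), the same classical polynomial $P(y_{1},\ldots,y_{n})=y_{1}\cdots y_{n}$ for (ii), and the same one-line multiplicativity computation for (iii). You actually go further than the paper by justifying that the degree is \emph{exactly} $n$ (the paper simply asserts ``generalized monomial of degree $n$'' without addressing nondegeneracy); just note that your argument $\varphi_{i}(1)=1$ tacitly assumes the $\varphi_{i}$ are nonzero, which is the intended reading but not literally forced by the paper's definition of homomorphism.
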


\begin{proof}
 Firstly, observe that the function $f$ defined on $\mathbb{F}$ by 
 \[
  f(x)= \varphi_{1}(x)\cdots \varphi_{n}(x)
  \qquad 
  \left(x\in \mathbb{F}\right)
 \]
is the trace of the symmetric $n$-additive mapping $F$ defined by 
\[
 F(x_{1}, \ldots, x_{n}) = \frac{1}{n!}\sum_{\sigma\in \mathscr{S}_{n}}\varphi_{1}(x_{\sigma(1)})\cdots \varphi_{n}(x_{\sigma(n)}) 
 \qquad 
 \left(x \in \mathbb{F}\right). 
\]
Thus $f$ is a generalized monomial of degree $n$. 
Secondly, in view of the definition of the function $f$ we immediately get that it is also a monomial of degree $n$. Indeed, let 
\[
 P(x_{1}, \ldots, x_{n})= x_{1}\cdots x_{n} 
 \qquad 
 \left(x\in \mathbb{C}\right)
\]
and then $f$ can be written as $f= P\circ \varphi$, where $\varphi\colon \mathbb{F}\to \mathbb{C}^{n}$ is 
\[
 \varphi(x)= \left(\varphi_{1}(x), \ldots, \varphi_{n}(x)\right) 
 \qquad 
 \left(x\in \mathbb{F}\right). 
\]
Thirdly, recall that in case $\varphi$ is a homomorphism between $\mathbb{F}$ and $\mathbb{C}$, then 
we also have 
\[
\varphi(x^{k}) = \varphi(x)^{k}
\]
for each $x\in \mathbb{K}$. Therefore, 
\[
 f(x^{k}) = \varphi_{1}(x^{k})\cdots \varphi_{n}(x^{k}) = \varphi_{1}(x)^{k}\cdots \varphi_{n}(x)^{k} 
 =
 \left(\varphi_{1}(x)\cdots\varphi_{n}(x)\right)^{k}
 = f(x)^{k} 
 \qquad 
 \left(x\in \mathbb{F}\right). 
\]
\end{proof}

\begin{lem}\label{lemma4}
 Let $\mathbb{F}\subset \mathbb{C}$ be a field
 and $d\colon \mathbb{F}\to \mathbb{C}$ be a derivation and consider the function 
 $f\colon \mathbb{F}\to \mathbb{C}$ defined by 
 \[
  f(x)= d(x^{n}) 
  \qquad 
  \left(x\in \mathbb{F}\right). 
 \]
Then the following statements are satisfied. 
\begin{enumerate}[(i)]
 \item The function $f\colon \mathbb{F}\to \mathbb{C}$ is a generalized polynomial of degree $n$. 
 \item The function $f\colon \mathbb{F}\to \mathbb{C}$ is a polynomial of degree $n$.
 \item For any positive integer $k\geq 2$, we have 
 \[
  f(x^{k})=kx^{(k-1)n}f(x)
  \qquad 
  \left(x\in \mathbb{F}\right). 
 \] 
\end{enumerate}
\end{lem}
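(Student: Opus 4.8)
The plan is to mimic the structure of the proof of Lemma~\ref{lemma3}, treating each of the three assertions in turn, but now with the product of homomorphisms replaced by a single derived function $f(x)=d(x^n)$. The first and most important observation is that $f$ is itself a concrete generalized monomial of degree $n$: using the Leibniz rule one expands
\[
d(x^n)=\sum_{i=1}^{n}x^{i-1}d(x)x^{n-i}=n x^{n-1}d(x),
\]
where commutativity of $\mathbb{C}$ was used. Hence $f(x)=n x^{n-1}d(x)$, and this is the trace of the symmetric $n$-additive map
\[
F(x_1,\ldots,x_n)=\frac{1}{(n-1)!}\sum_{\sigma\in\mathscr{S}_n}x_{\sigma(1)}\cdots x_{\sigma(n-1)}\,d(x_{\sigma(n)}),
\]
because each term $x_{\sigma(1)}\cdots x_{\sigma(n-1)}$ is $(n-1)$-additive in its arguments, $d$ is additive, and on the diagonal one recovers $n! \cdot \tfrac{1}{(n-1)!}\,x^{n-1}d(x)=n x^{n-1}d(x)$. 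This settles~(i), that $f$ is a generalized polynomial (in fact a generalized monomial) of degree $n$.

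For~(ii) I would exploit exactly the same rewriting: $f(x)=n\,a_1(x)^{n-1}a_2(x)$ where $a_1(x)=x$ and $a_2(x)=d(x)$ are both additive functions $\mathbb{F}\to\mathbb{C}$ (the identity embedding is additive since $\mathbb{F}\subseteq\mathbb{C}$, and a derivation is additive by definition). Taking the classical complex polynomial $P(t_1,t_2)=n\,t_1^{n-1}t_2$, we get $f=P\circ(a_1,a_2)$, which is by definition a (normal) polynomial; its degree is $n$ since the monomial $t_1^{n-1}t_2$ has total degree $n$ (one should note $a_1,a_2$ are linearly independent over $\mathbb{C}$ as functions — $d$ is not a scalar multiple of the identity map since e.g. $d(1)=0$ while the identity does not vanish at $1$ — so by Lemma~\ref{L_lin_dep} the degree does not drop).

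For~(iii) the computation is now direct from the closed form. For $k\geq 2$,
\[
f(x^k)=d\bigl((x^k)^n\bigr)=d\bigl(x^{kn}\bigr)=kn\,x^{kn-1}d(x)
=k\,x^{(k-1)n}\cdot n\,x^{n-1}d(x)=k\,x^{(k-1)n}f(x),
\]
for all $x\in\mathbb{F}$, which is the claimed identity. I do not anticipate a genuine obstacle here; the only point requiring a little care is justifying the degree claims in~(i) and~(ii) — in particular that passing from $d(x^n)$ to $n x^{n-1}d(x)$ does not lower the degree below $n$ — which is handled by the linear independence of $a_1$ and $a_2$ together with Lemma~\ref{L_lin_dep}, and by checking that $F$ is not identically zero whenever $d\not\equiv 0$ (if $d\equiv 0$ the statement is vacuous or trivial, with $f\equiv 0$).
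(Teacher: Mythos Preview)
Your proof is correct and, for parts~(ii) and~(iii), essentially identical to the paper's argument: both expand $d(x^{n})=nx^{n-1}d(x)$ via the Leibniz rule, write $f(x)=P(x,d(x))$ with $P(t_{1},t_{2})=n t_{1}^{n-1}t_{2}$, and compute $f(x^{k})=knx^{kn-1}d(x)=kx^{(k-1)n}f(x)$ directly.

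The one genuine difference is in part~(i). You first pass through the Leibniz expansion and then symmetrize $x_{1}\cdots x_{n-1}\,d(x_{n})$ over $\mathscr{S}_{n}$ to produce a symmetric $n$-additive function with trace $nx^{n-1}d(x)$. The paper instead observes directly that
\[
F_{n}(x_{1},\ldots,x_{n})=d(x_{1}\cdots x_{n})
\]
is already symmetric and $n$-additive (additivity in each slot follows from additivity of $d$ together with distributivity of multiplication), with trace $d(x^{n})=f(x)$. This is shorter and avoids both the Leibniz expansion and the symmetrization at this stage; your route works just as well but does a little more than necessary. Incidentally, your extra care about linear independence of $x\mapsto x$ and $d$ (to ensure the degree is exactly~$n$ rather than merely at most~$n$) is a point the paper does not make explicit.
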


\begin{proof}
 Let $\mathbb{F}\subset \mathbb{C}$ be a field and  
 $d\colon \mathbb{F}\to \mathbb{C}$ be a derivation and let us consider the function 
 $f\colon \mathbb{F}\to \mathbb{C}$ defined through 
 \[
  f(x)= d(x^{n}) 
  \qquad 
  \left(x\in \mathbb{F}\right). 
 \]
Observe that in this case the function $F_{n}\colon \mathbb{F}^{n}\to \mathbb{C}$ 
defined by 
\[
 F_{n}(x_{1}, \ldots, x_{n})= d\left(x_{1}\cdots x_{n}\right) 
 \qquad 
 \left(x_{1}, \ldots, x_{n}\in \mathbb{F}\right)
\]
is a symmetric and $n$-additive function. Furthermore, its trace is $f$, showing that $f$ is a generalized monomial of degree $n$. 

At the same time, since $d$ is a derivation, we also have that 
\[
 d(x^{n})= nx^{n-1}d(x) 
 \qquad 
 \left(x\in \mathbb{F}\right), 
\]
yielding that 
\[
 f(x)= nx^{n-1}d(x)= P(x, d(x)) 
 \qquad 
 \left(x\in \mathbb{F}\right), 
\]
with the two-variable complex polynomial
\[
 P(x, y)= nx^{n-1}y 
 \qquad 
 \left(x, y\in \mathbb{C}\right). 
\]
Therefore, $f$ is a monomial of degree $n$. 

Finally, let $k\geq 2$  be a positive integer. Then 
\[
 f(x^{k})= d\left((x^{k})^{n}\right)= d(x^{kn})= kn x^{kn-1}d(x)
 =
 kx^{(k-1)n}\cdot \left(nx^{n-1}d(x)\right)= kx^{(k-1)n}f(x) 
 \qquad 
 \left(x\in \mathbb{F}\right). 
\]
\end{proof}

\begin{lem}\label{lemma5}
 Let $n$ and $k$ be positive integers, $\alpha\in \mathbb{N}^{n}$ be an n-dimensional multi-index, $\mathbb{F}$ be a field with $\mathrm{char}(\mathbb{F})=0$ and 
 $a_{1}, \ldots, a_{n}\colon \mathbb{F} \allowbreak\to \mathbb{C}$ be additive functions and 
 $a=(a_{1}, \ldots, a_{n})$. Then the mapping 
 \[
  \mathbb{F}\ni x\longmapsto a^{\alpha}(x^{k})
 \]
is a generalized monomial of degree $|\alpha|\cdot k$. 
\end{lem}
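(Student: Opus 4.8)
The plan is to exhibit $x\mapsto a^{\alpha}(x^{k})$ explicitly as the trace of a symmetric $(|\alpha|\cdot k)$-additive function, built in two stages: first dealing with the inner power map $x\mapsto x^{k}$, then incorporating the product structure of $a^{\alpha}$.

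First I would observe that $M_{k}\colon \mathbb{F}^{k}\to \mathbb{F}$, $M_{k}(x_{1},\ldots ,x_{k})=x_{1}\cdots x_{k}$, is symmetric (commutativity of $\mathbb{F}$) and $k$-additive (distributivity), with trace $x\mapsto x^{k}$. Composing with an additive map $a_{i}$ one gets $B_{i}=a_{i}\circ M_{k}\colon \mathbb{F}^{k}\to \mathbb{C}$, which is again symmetric and $k$-additive — composing an additive function with a symmetric $k$-additive function preserves both properties — and whose trace is $x\mapsto a_{i}(x^{k})$. Thus each $x\mapsto a_{i}(x^{k})$ is already a generalized monomial of degree $k$.

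Next I would encode the product. Introduce $|\alpha|\cdot k$ variables grouped into $|\alpha|=\sum_{i=1}^{n}\alpha_{i}$ blocks of length $k$, with $\alpha_{i}$ blocks assigned to the index $i$; write a typical variable as $x_{i,l,j}$ where $1\le i\le n$, $1\le l\le \alpha_{i}$, $1\le j\le k$. Define
\[
H\bigl((x_{i,l,j})\bigr)=\prod_{i=1}^{n}\prod_{l=1}^{\alpha_{i}}a_{i}\!\left(\prod_{j=1}^{k}x_{i,l,j}\right).
\]
Every variable $x_{i,l,j}$ occurs inside exactly one factor $a_{i}\!\left(\prod_{j}x_{i,l,j}\right)$, which is additive in that variable (since $a_{i}$ is additive and $M_{k}$ is multi-additive), while all other factors are constant in it; hence $H$ is $(|\alpha|\cdot k)$-additive. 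Symmetrizing,
\[
A(y_{1},\ldots ,y_{|\alpha|k})=\frac{1}{(|\alpha|k)!}\sum_{\sigma\in \mathscr{S}_{|\alpha|k}}H\bigl(y_{\sigma(1)},\ldots ,y_{\sigma(|\alpha|k)}\bigr),
\]
which is legitimate as the range is $\mathbb{C}$, yields a symmetric $(|\alpha|\cdot k)$-additive function. On the diagonal every summand equals $\prod_{i=1}^{n}\prod_{l=1}^{\alpha_{i}}a_{i}(x^{k})=\prod_{i=1}^{n}a_{i}(x^{k})^{\alpha_{i}}=a^{\alpha}(x^{k})$, so $A^{\ast}(x)=a^{\alpha}(x^{k})$, which is precisely the claim.

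I do not expect a genuine obstacle here; the points needing care are purely formal: (a) checking that composition with an additive map and the symmetrization operator both preserve $m$-additivity (each variable must sit inside a single additive slot), and (b) the bookkeeping that keeps the total variable count equal to $|\alpha|\cdot k$ and makes the diagonal evaluation collapse to $a^{\alpha}(x^{k})$. The degenerate cases $|\alpha|=0$ (a constant, degree $0$) and $\alpha_{i}=0$ for some $i$ (that index simply contributes no block) are consistent with the stated degree and need at most a one-line remark. Note that $\mathrm{char}(\mathbb{F})=0$ is not actually used: only that $\mathbb{F}$ is a commutative ring and that the target $\mathbb{C}$ is a field of characteristic zero.
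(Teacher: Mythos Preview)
Your argument is correct and follows essentially the same route as the paper: both construct the $(|\alpha|\cdot k)$-additive function $H\bigl((x_{i,l,j})\bigr)=\prod_{i}\prod_{l} a_{i}\bigl(\prod_{j} x_{i,l,j}\bigr)$ and observe that its diagonal is $a^{\alpha}(x^{k})$. Your version is in fact slightly more careful, since you explicitly symmetrize to obtain a \emph{symmetric} multi-additive function as the definition requires, whereas the paper writes down $H$ directly without this step.
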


\begin{proof}
 Let $n$ and $k$ be positive integers, $\alpha\in \mathbb{N}^{n}$ be an $n$-dimensional multi-index and 
 $a_{1}, \ldots, a_{n}\colon \mathbb{F} \allowbreak\to \mathbb{C}$ be additive functions. 
 If $\alpha= (\alpha_{1}, \ldots, \alpha_{n})$, then the function 
 \[
 \mathbb{F}\ni  x\longmapsto a_{1}^{\alpha_{1}}(x^{k})\cdots a_{n}^{\alpha_{n}}(x^{k}) 
 \]
is the trace of the $|\alpha|\cdot k$-additive function $F$ defined by 
\[
 F(x_{1, 1}, \ldots, x_{\alpha_{n}, k})= 
 \prod_{i=1}^{n}\prod_{j=1}^{\alpha_{i}}a_{i}(x_{j, 1}\cdots x_{j, k}). 
\]
\end{proof}

Let $\mathbb{F}$ be a field and denote $\mathbb{F}^{\times}$ the multiplicative group of the nonzero elements of $\mathbb{F}$. 
Obviously, every (normal) polynomial $p\colon \mathbb{F}^{\times}\to \mathbb{C}$ is a generalized polynomial, too. Furthermore, in view of Theorem \ref{thm_torsion} we know that these are the only generalized polynomials if the torsion free rank of $\mathbb{F}^{\times}$ is finite. For example if we take 
$\mathbb{F}=\mathbb{Q}$ (the field of the rationals), then this is the case. While for larger fields, the same does not hold in general. Despite Theorem \ref{thm_torsion} gives an elegant criterion for the above problem, with the aid of this result it is not too easy to imagine how generalized polynomials look like. From one hand, this is the purpose of the remark below.

\begin{rem}\label{rem3}
 Notice that the above lemma cannot be in general strengthened. To see this let 
 $a\colon \mathbb{F}\to \mathbb{C}$ be a non-identically zero additive function. 
 As the following proposition shows, the mapping 
 \[
  \mathbb{F} \ni x\longmapsto a(x^{2})
 \]
 is a generalized monomial of degree two that is not necessarily a (normal) monomial. 
 \end{rem}
 
 \begin{prop}
  Let $\mathbb{F}\subset \mathbb{C}$ be a field and $a\colon \mathbb{F}\to\mathbb{C}$ be a non-identically zero additive function. The mapping 
  \[
  \mathbb{F} \ni x\longmapsto a(x^{2})
  \]
  is a monomial of degree two if and only if 
  \[
   a(x)= \varphi(d(x))+a(1)\cdot \varphi(x)
   \qquad 
   \left(x\in \mathbb{F}\right)
  \]
or 
\[
 a(x)= \alpha \varphi_{1}(x)+\beta \varphi_{2}(x) 
 \qquad 
 \left(x\in \mathbb{F}\right), 
\]
where $\alpha, \beta$ are complex constants, $d\colon \mathbb{F}\to \mathbb{C}$ is a non-identically zero derivation and $\varphi, \varphi_{1}, \varphi_{2}\colon \mathbb{F}\to \mathbb{C}$ are homorphisms such that $\varphi_{1}$ and $\varphi_{2}$ are linearly independent. 
 \end{prop}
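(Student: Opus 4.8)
The proof splits into the two implications; I would treat sufficiency by a direct substitution and the converse by the structure theory developed above. For sufficiency, if $a(x)=\varphi(d(x))+a(1)\varphi(x)$ with $\varphi$ a homomorphism and $d$ a derivation, then $\varphi(x^{2})=\varphi(x)^{2}$ and $d(x^{2})=2xd(x)$ give $a(x^{2})=2\varphi(x)\cdot(\varphi\circ d)(x)+a(1)\varphi(x)^{2}$, a quadratic form in the additive functions $\varphi\circ d$ and $\varphi$; these are linearly independent whenever $d\not\equiv 0$ (otherwise $(\varphi\circ d)(1)=0$ would force $\varphi\circ d\equiv 0$, hence $d\equiv 0$ since $\varphi$ is injective), so $a(x^{2})$ is genuinely a monomial of degree two. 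Likewise, if $a=\alpha\varphi_{1}+\beta\varphi_{2}$ then $a(x^{2})=\alpha\varphi_{1}(x)^{2}+\beta\varphi_{2}(x)^{2}$, which is a monomial of degree two by Lemma~\ref{L_lin_dep}.

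For the converse, assume $f(x)=a(x^{2})$ is a monomial of degree two. Since $f$ is the trace of the symmetric bi-additive map $(x,y)\mapsto a(xy)$, the Polarization formula (together with injectivity of multiplication by $2$ in $\mathbb{C}$) identifies that map as $B(x,y)=\tfrac12\bigl(f(x+y)-f(x)-f(y)\bigr)=a(xy)$; writing the degree-two monomial $f$ as a quadratic form in linearly independent additive functions and diagonalizing over $\mathbb{C}$, I get a representation $a(xy)=\sum_{k=1}^{r}e_{k}(x)e_{k}(y)$ with $e_{1},\dots,e_{r}$ linearly independent additive and $r\ge 1$. Two preliminary remarks: putting $y=1$ shows $a=\sum_{k}e_{k}(1)e_{k}$ lies in $W:=\operatorname{span}_{\mathbb{C}}\{e_{1},\dots,e_{r}\}$, and the set $\{x:a(xy)=0\text{ for all }y\}=\bigcap_{k}\ker e_{k}$ is an ideal of $\mathbb{F}$ (from $a((zx)y)=a(x(zy))$), hence trivial because $a\not\equiv 0$; thus $(e_{1},\dots,e_{r})$ embeds $\mathbb{F}$ into $\mathbb{C}^{r}$.

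The core of the proof is to convert associativity and commutativity of multiplication into an algebraic constraint. Since $W$ is spanned by the multiplicative translates $y\mapsto a(xy)$ of $a$ and is closed under the substitutions $y\mapsto xy$, each $y\mapsto e_{k}(xy)$ again lies in $W$, which yields a ring homomorphism $x\mapsto T(x)\in\operatorname{End}_{\mathbb{C}}(W)$ with $T(1)=\mathrm{id}$ and $e_{k}(xy)=\sum_{l}T(x)_{lk}e_{l}(y)$; hence $\mathbb{F}$ sits inside the finite‑dimensional commutative $\mathbb{C}$-algebra $\mathcal{A}:=\mathbb{C}[T(\mathbb{F})]$. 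Passing to the primary decomposition $\mathcal{A}=\prod_{i}\mathcal{A}_{i}$, $W=\bigoplus_{i}W_{i}$, on each $W_{i}$ the operator $T(x)$ acts as $\chi_{i}(x)\,\mathrm{id}+N_{i}(x)$, where $\chi_{i}\colon\mathbb{F}\to\mathbb{C}$ — the composite $\mathbb{F}\hookrightarrow\mathcal{A}\to\mathcal{A}/\mathfrak{m}_{i}=\mathbb{C}$ — is a homomorphism and $N_{i}$ is additive with nilpotent values. Writing $a=\sum_{i}a_{i}$ with $a_{i}$ a generator of $W_{i}$, the identity $a_{i}(xy)=\chi_{i}(x)a_{i}(y)+(N_{i}(x)a_{i})(y)$ together with $a_{i}(xy)=a_{i}(yx)$ is solved blockwise: a one‑dimensional block makes $a_{i}$ a scalar multiple of $\chi_{i}$, and a two‑dimensional block makes $a_{i}(x)=\varphi_{i}(d_{i}(x))+t_{i}\varphi_{i}(x)$ with $\varphi_{i}=\chi_{i}$ and $d_{i}=\varphi_{i}^{-1}\circ\delta\circ\varphi_{i}$, $\delta$ being the Leibniz map induced on $\chi_{i}(\mathbb{F})$, so that $d_{i}$ is a derivation of $\mathbb{F}$ (one may invoke Theorem~\ref{T14.2.1} to realize it from a derivation of the subfield $\chi_{i}(\mathbb{F})$). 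The step I expect to be the genuine obstacle is exactly the bookkeeping that the degree‑two hypothesis imposes on this decomposition: it has to exclude blocks of dimension $\ge 3$ (equivalently $N_{i}^{2}=0$ on every block) and to limit the number of blocks, so that $a$ reduces to either $\varphi(d(x))+a(1)\varphi(x)$ (a single two‑dimensional block) or $\alpha\varphi_{1}(x)+\beta\varphi_{2}(x)$ (two one‑dimensional blocks, whose distinct characters are automatically linearly independent). I would extract this by feeding the block form of $T$ back into $a(xy)=\sum_{k}e_{k}(x)e_{k}(y)$ and comparing homogeneous degrees, which pins down $r\le 2$ and the stated shape; collecting the contributions of the blocks then gives precisely the two alternatives.
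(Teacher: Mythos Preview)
Your sufficiency direction matches the paper's (which dismisses it as ``obvious''), but your converse takes a genuinely different route. The paper does \emph{not} set up a module $W$, a ring map $T\colon\mathbb{F}\to\operatorname{End}(W)$, or a primary decomposition. Instead, after polarizing $a(x^{2})=\sum_{i,j}\alpha_{i,j}a_{i}(x)a_{j}(x)$ to obtain a bilinear identity for $a(xy)$, it simply rewrites that identity as a Levi--Civita equation on the multiplicative group $\mathbb{F}^{\times}$ and invokes Sz\'ekelyhidi's classification \cite[pp.~93--94]{Sze91} to obtain either $a(x)=(\alpha l(x)+\beta)m(x)$ or $a(x)=\alpha m_{1}(x)+\beta m_{2}(x)$, with $l$ logarithmic and $m,m_{1},m_{2}$ multiplicative on $\mathbb{F}^{\times}$; additivity of $a$ on $(\mathbb{F},+)$ then forces $m,m_{i}$ to be field homomorphisms and $l\cdot m$ to be $\varphi\circ d$ for a derivation $d$. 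So the paper outsources the structural work to an existing exponential--polynomial classification, whereas you are effectively rederiving that classification via the representation theory of the commutative image algebra $\mathbb{C}[T(\mathbb{F})]$; your approach is more self-contained but heavier.

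On the step you flag as the ``genuine obstacle'' (forcing $r\le 2$ and excluding large Jordan blocks): note that the paper does not derive this at all --- it takes as its starting point that a degree-two monomial is written with two linearly independent additive functions $a_{1},a_{2}$, so the rank bound is read off the hypothesis rather than proved. Under that reading your worry evaporates: with $r\le 2$ the commutative subalgebra of $M_{2}(\mathbb{C})$ is either $\mathbb{C}\times\mathbb{C}$ (two characters, giving the $\alpha\varphi_{1}+\beta\varphi_{2}$ case) or $\mathbb{C}[\varepsilon]/(\varepsilon^{2})$ (one character plus nilpotent, giving the $\varphi\circ d+a(1)\varphi$ case), and your block analysis goes through directly. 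If instead you intend ``monomial of degree two'' to allow quadratic forms of arbitrary rank in many additive functions, be aware that neither your argument nor the paper's supplies the reduction to $r\le 2$, and in fact $a=\varphi_{1}+\varphi_{2}+\varphi_{3}$ with three independent homomorphisms already gives $a(x^{2})=\sum_{i}\varphi_{i}(x)^{2}$ of rank three --- so that reading would falsify the proposition itself, not just the proof.
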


\begin{proof} 
 Let $\mathbb{F}\subset \mathbb{C}$ be a field and $a\colon \mathbb{F}\to\mathbb{C}$ be a non-identically zero additive function.
Assume that the mapping appearing in the proposition is a monomial of degree two. Then there exist linearly independent additive functions 
$a_{1}, a_{2}\colon \mathbb{F}\to \mathbb{C}$ and complex constants $\alpha_{i, j}$, $i, j=1, 2$ such that 
\[
 a(x^{2})= \alpha_{1, 1}a_{1}(x)^{2}+(\alpha_{1, 2}+\alpha_{2, 1})a_{1}(x)a_{2}(x)+\alpha_{2, 2}a_{2}(x)^{2}
\]
for each $x\in \mathbb{F}$. Since both sides of the above identity are traces of symmetric bi-additive functions, we can use the Polarization Formula to get that 
\[
 a(xy)= \alpha_{1, 1}a_{1}(x)a_{1}(y)+\frac{\alpha_{1, 2}+\alpha_{2, 1}}{2}\left(a_{1}(x)a_{2}(y)+a_{1}(y)a_{2}(x)\right)+\alpha_{2, 2}a_{2}(x)a_{2}(y)
\]
is fulfilled by any $x, y\in \mathbb{F}$. 
After some rearrangement, we have 
\[
 a(xy)
 = a_{1}(x) \cdot \left(\alpha_{1, 1}a_{1}(y)+\frac{\alpha_{1, 2}+\alpha_{2, 1}}{2}a_{2}(y)+
 \right)
 +a_{2}(x)\cdot \left(\frac{\alpha_{1, 2}+\alpha_{2, 1}}{2} a_{1}(y)+\alpha_{2, 2}a_{2}(y)\right)
\]
for all $x, y\in \mathbb{F}$, which is a Levi-Civita equation on $\mathbb{F}^{\times}$ (on the multiplicative group of the non-zero elements of $\mathbb{F}$). Applying the result of \cite[pages 93--94]{Sze91}, we deduce that there exist complex constants $\alpha, \beta$, an additive function 
$l\colon \mathbb{F}^{\times}\to \mathbb{C}$ and exponentials $m, m_{1}, m_{2}\colon \mathbb{F}^{\times}\to \mathbb{C}$, $m_{1}$ and $m_{2}$ are linearly independent such that 
\[
 a(x)= (\alpha l(x)+\beta) m(x) 
 \qquad 
 \left(x\in \mathbb{F}^{\times}\right)
\]
or 
\[
 a(x)= m_{1}(x)+m_{2}(x) 
 \qquad 
 \left(x\in \mathbb{F}^{\times}\right). 
\]
Recall that on the group $\mathbb{F}^{\times}$ multiplication as group operation is considered. Therefore these functions fulfill the following identities: 
\[
 l(xy)= l(x)+l(y) 
 \qquad 
 \left(x, y\in \mathbb{F}^{\times}\right)
\]
and 
\[
 m(xy)= m(x)m(y) 
 \qquad 
 \left(x, y\in \mathbb{F}^{\times}\right)
\]
as well as 
\[
 m_{i}(xy)= m_{i}(x)m_{i}(y) 
 \qquad 
 \left(x, y\in \mathbb{F}^{\times}, i=1, 2\right). 
\]
At the same time, the mapping $a$ was assumed to be additive, thus in the above representation we have 
\[
   a(x)=\varphi(d(x))+ a(1)\cdot\varphi(x) 
   \qquad 
   \left(x\in \mathbb{F}\right)
  \]
or 
\[
 a(x)= \alpha \varphi_{1}(x)+\beta \varphi_{2}(x) 
 \qquad 
 \left(x\in \mathbb{F}\right), 
\]
where $\alpha, \beta$ are complex constants, $d\colon \mathbb{F}\to \mathbb{C}$ is a non-identically zero derivation and $\varphi, \varphi_{1}, \varphi_{2}\colon \mathbb{F}\to \mathbb{C}$ are homomorphisms such that $\varphi_{1}$ and $\varphi_{2}$ are linearly independent. 

The converse is obvious. 
\end{proof}

\begin{rem}
 The proof of the previous proposition can be a starting point of further investigations. More precisely, if $n$ is a positive integer, $a\colon \mathbb{F}\to \mathbb{C}$ is a non-identically zero additive function and the mapping 
  \[
  \mathbb{F} \ni x\longmapsto a(x^{2})
  \]
  is a monomial of degree $n$, then there exist linearly independent additive functions 
  $a_{1}, \ldots, a_{n}\colon \mathbb{F}\to \mathbb{C}$ and complex constants $\alpha_{i, j}$, $i, j=1, \ldots, n$ such that 
  \[
   a(x^{2})= \sum_{i, j=1}^{n}\alpha_{i, j}a_{i}(x)a_{j}(x) 
   \qquad 
   \left(x\in \mathbb{F}\right). 
  \]
Since both sides of the above identity are traces of symmetric bi-additive functions, we can use the Polarization Formula to get that
\[
 a(xy)= \sum_{i, j=1}^{n}\frac{\alpha_{i, j}+\alpha_{j, i}}{2}\left(a_{i}(x)a_{j}(y)+a_{j}(x)a_{i}(y)\right)
 = \sum_{i=1}^{n}\widetilde{a_{i}}(x)\widetilde{a_{j}}(y)
\]
for all $x, y\in \mathbb{F}^{\times}$ which is a Levi-Civita equation on the group $\mathbb{F}^{\times}$. In other words, the mapping $a$ as a function restricted to the multiplicative group $\mathbb{F}^{\times}$, is a normal exponential polynomial of degree $n$. In view of the results of \cite[page 43 and page 79]{Sze91}, 
\[
 a(x)= \sum_{j=1}^{k}P_{j}\left(l_{j, 1}(x), l_{j, 2}(x), \ldots, l_{j, n_{j}-1}(x)\right)m_{j}(x) 
 \qquad 
 \left(x\in \mathbb{F}^{\times}\right), 
\]
where $k, n_{1}, \ldots, n_{k}$ are positive integers, $m_{1}, \ldots, m_{k}$ are different, non-zero com\-plex-valued exponentials on the group $\mathbb{F}^{\times}$, further 
$\left\{l_{j, 1}, \ldots, l_{j, n_{j}-1} \right\}$ are sets of linearly independent, com\-plex-valued additive functions defined on $\mathbb{F}^{\times}$ for $j=1, \ldots, k$ and 
$P_{j}\colon \mathbb{C}^{n_{j}-1}\to \mathbb{C}$ are complex polynomials of degree at most $n_{j}-1$ and in $n_{j}-1$ variables for each $j=1, \ldots, k$. 

We conjecture that as a continuation, a result of Kiss--Laczkovich \cite{KisLac18} might be useful. According to Theorem 1.1 of this paper, $a\colon \mathbb{F}\to \mathbb{C}$ is an additive function with $a(1)=0$ and 
$D/j$, as a map from the group $\mathbb{F}^{\times}$ to $\mathbb{C}$, is a generalized polynomial of degree at most $n$ if and only if $a$ is a derivation of order at most $n$. Here $j$ denotes the identity map from $\mathbb{F}$ to $\mathbb{C}$. 
\end{rem}

\begin{ex}
 Let $d\colon \mathbb{C}\to \mathbb{C}$ be a non-identically zero derivation. Due to Theorem \ref{T14.2.1} such a mapping does exist. Furthermore, due to Theorem 14.5.1 of \cite{Kuc}, there exist non-trivial endomorphisms of $\mathbb{C}$ (trivial endomorphisms are meant the identically zero and the identity mapping, resp.). Let $\varphi\colon \mathbb{C}\to \mathbb{C}$ be a non-trivial endomorphism. Then the mapping $a$ defined on $\mathbb{C}$ by 
 \[
  a(x)= d(x)+\varphi(x) 
  \qquad 
  \left(C\right)
 \]
is clearly additive and we have 
\[
 a(x^2)= d(x^{2})+\varphi(x^{2})= 2xd(x)+\varphi(x)^{2} 
 \qquad 
 \left(x\in \mathbb{F}\right), 
\]
showing that the mapping 
\[
 \mathbb{C} \ni x \longmapsto a(x^{2})
\]
is a generalized polynomial of degree exactly two and it is a (normal) polynomial of degree exactly 
three. 

Note that the situation changes if the domain is the real field. Indeed, all homomorphisms 
$\varphi\colon \mathbb{R}\to \mathbb{C}$ are trivial, i.e. they are either identically zero or they coincide with the identity map. Therefore if $d\colon \mathbb{R}\to \mathbb{C}$ is a non-identically zero derivation and we define 
\[
 a(x)= d(x)+x 
 \qquad 
 \left(x\in \mathbb{R}\right),
\]
then the mapping \[
 \mathbb{C} \ni x \longmapsto a(x^{2})
\]
is a polynomial of degree exactly two. 
\end{ex}

\begin{rem}
From the above thoughts we obtain that if $a\colon \mathbb{F}\to \mathbb{C}$ is an additive function such that the mapping 
\[
 \mathbb{F} \ni x \longmapsto a(x^{2})
\]
is a generalized monomial which is not a monomial, then for all positive integer $k\geq 2$, the mapping 
\[
 \mathbb{F} \ni x \longmapsto a(x^{k})
\]
is also a generalized monomial which is not a monomial. 

Indeed, observe that the above mapping is the trace of the symmetric and $k$-additive function $A_{k}$ defined on $\mathbb{F}^{k}$ by 
\[
 A_{k}(x_{1}, \ldots, x_{k})= a(x_{1}\cdots x_{k}) 
 \qquad 
 \left(x_{1}, \ldots, x_{k}\in \mathbb{F}\right). 
\]

If there would exist a positive integer $k>2$ such that the mapping 
\[
 \mathbb{F} \ni x \longmapsto a(x^{k})
\]
would be a monomial, then all the translates of this mapping would also be generalized monomials, as well. 
By the Polarization formula, we have  
\[
 \Delta_{y}^{2}\Delta^{k-2}_{1}a(x^{k})= k!A(y, y, 1, \ldots, 1)= k!a(y^{2}) 
 \qquad 
 \left(x, y\in \mathbb{F}\right). 
\]
From this we would deduce that the mapping 
\[
 \mathbb{F} \ni y \longmapsto a(y^{2})
\]
is a monomial of degree two, which is a contradiction. 
\end{rem}

\begin{lem}\label{lemma_6}
 Let $k$ and $n$ be positive integers and $f\colon \mathbb{F}\to \mathbb{C}$ be a generalized monomial of degree n, where $\mathbb{F}$ is assumed to be a field with $\mathrm{char}(\mathbb{F})=0$. Then the mapping 
 \[
  \mathbb{F} \ni x \longmapsto f(x^{k})
 \]
is a generalized monomial of degree $n\cdot k$. 
\end{lem}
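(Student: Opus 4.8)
The plan is to reduce the assertion to the (already repeatedly used) fact that the trace of a multiadditive function is unaffected by symmetrization, and then to exhibit an explicit $nk$-additive function whose trace is the map $x\mapsto f(x^{k})$. Since $f$ is a generalized monomial of degree $n$, by definition there is a symmetric, $n$-additive function $A\colon \mathbb{F}^{n}\to\mathbb{C}$ with $f=A^{\ast}$; by the Polarization formula (Theorem~\ref{Thm_polarization}) together with Lemma~\ref{mainfact} this $A$ is in fact uniquely determined by $f$, although uniqueness will not be needed.

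First I would relabel the coordinates of $\mathbb{F}^{nk}$ as $x_{i,j}$ with $1\le i\le n$ and $1\le j\le k$, and define $B\colon\mathbb{F}^{nk}\to\mathbb{C}$ by
\[
 B\bigl((x_{i,j})_{i,j}\bigr)=A\Bigl(\textstyle\prod_{j=1}^{k}x_{1,j},\ \prod_{j=1}^{k}x_{2,j},\ \ldots,\ \prod_{j=1}^{k}x_{n,j}\Bigr),
\]
i.e. the $nk$ variables are split into $n$ consecutive blocks of size $k$, each block is multiplied out, and $A$ is applied to the resulting $n$-tuple. Evaluating on the diagonal immediately gives $B^{\ast}(x)=A(x^{k},\ldots,x^{k})=A^{\ast}(x^{k})=f(x^{k})$, which is precisely the function in the statement; in particular for $k=1$ this is already the whole claim.

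The one step that needs an actual (small) verification — and hence the heart of the argument — is that $B$ is $nk$-additive, i.e. additive in each of its $nk$ variables separately. Fix all variables except some $x_{i,j}$. Then the $i$-th entry of $A$ equals $c\,x_{i,j}$, where $c=\prod_{l\ne j}x_{i,l}\in\mathbb{F}$ is a constant, while all the other entries of $A$ are constants. Hence, as a function of $x_{i,j}$, $B$ is $x_{i,j}\mapsto A(\ldots,c\,x_{i,j},\ldots)$, and this is additive because $A$ is additive in its $i$-th slot: $A(\ldots,c(u+v),\ldots)=A(\ldots,cu+cv,\ldots)=A(\ldots,cu,\ldots)+A(\ldots,cv,\ldots)$. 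Everything else is formal.

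Finally, $B$ need not be symmetric, so I would pass to its symmetrization
\[
 \widetilde{B}(x_{1},\ldots,x_{nk})=\frac{1}{(nk)!}\sum_{\sigma\in\mathscr{S}_{nk}}B(x_{\sigma(1)},\ldots,x_{\sigma(nk)}),
\]
which is symmetric and $nk$-additive (a finite rational combination of $nk$-additive functions is $nk$-additive, and division by $(nk)!$ is available in $\mathbb{C}$), and whose trace still satisfies $\widetilde{B}^{\ast}=B^{\ast}=f(x^{k})$. By the definition of a generalized monomial this proves that $x\mapsto f(x^{k})$ is a generalized monomial of degree $nk$, as claimed. (As a by‑product, this construction specializes to and unifies Lemmas~\ref{lemma3}, \ref{lemma4} and \ref{lemma5}.)
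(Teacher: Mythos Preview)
Your proof is correct and follows essentially the same approach as the paper: both construct the $nk$-additive function $B(x_{1,1},\ldots,x_{n,k})=A\bigl(\prod_j x_{1,j},\ldots,\prod_j x_{n,j}\bigr)$ and observe that its trace is $f(x^k)$. Your version is slightly more careful in that you explicitly verify additivity in each slot and then symmetrize $B$ (the paper simply asserts $nk$-additivity and does not mention symmetrization), but the underlying idea is identical.
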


\begin{proof}
 Since $f\colon \mathbb{F}\to \mathbb{C}$ is a generalized monomial of degree $n$, there exists an 
 $n$-additive function $F_{n}\colon \mathbb{F}^{n}\to \mathbb{C}$ such that its trace is the function $f$. In this case, the mapping 
 $F_{nk}\colon \mathbb{F}^{nk}\to \mathbb{C}$ defined by 
 \begin{multline*}
  F_{nk}(x_{1, 1}, \ldots, x_{n, k}) =
  F_{n}(x_{1, 1} \cdots x_{1, k}, x_{2, 1} \cdots x_{2, k}, \ldots, x_{n, 1} \cdots x_{n, k})
  \\
  \left(x_{i, j}\in \mathbb{F}, i=1, \ldots, n, j=1, \ldots, k\right)
 \end{multline*}
is an $(n\cdot k)$-additive function whose trace is 
\[
 F_{nk}(x, \ldots, x)= F_{n}(x^{k}, \ldots, x^{k})= f(x^{k})
 \qquad 
 \left(x\in \mathbb{F}\right). 
\]
\end{proof}

\begin{rem}
 The above lemma with a different proof can be found among others in \cite{AicMoo21}. 
\end{rem}

Now we turn to deal with the main problem of this paper, which is the following. 
Assume $\mathbb{F}$ to be a field. 
Let further $P\in \mathbb{F}[x]$ and $Q\in \mathbb{C}[x]$ be polynomials. Our aim is to prove characterization theorems for quadratic functions $f\colon \mathbb{F}\to \mathbb{C}$ that also fulfill equation 
\[
 f(P(x))= Q(f(x))
\]
for each $x\in \mathbb{F}$.

As for the difficulty of the problem, it is an important condition that the function $f$ is supposed to be quadratic (that is, it is a \emph{generalized} monomial of degree two). As the statement below shows, 
although the result is the same, the proof is much easier for normal monomials of degree two. 

\begin{prop}\label{Prop1}
 Let $\mathbb{F}$ be a field with $\mathrm{char}(\mathbb{F})=0$ and $f\colon \mathbb{F}\to \mathbb{C}$ be a quadratic function that can be represented as 
 \[
  f(x)= a_{1}(x)a_{2}(x) 
  \qquad 
  \left(x\in \mathbb{F}\right)
 \]
 with the aid of the additive functions $a_{1}, a_{2}\colon \mathbb{F}\to \mathbb{C}$. 
Then equation 
 \begin{equation}\label{Eq1_simp}
  f(x^{2})= f(x)^{2}
 \end{equation}
holds for each $x\in \mathbb{F}$ \emph{if and only if} there exist complex-valued non-trivial homomorphisms  $\varphi_{1}, \varphi_{2}$ defined on $\mathbb{F}$ such that 
\[
 f(x)= f(1)\cdot \varphi_{1}(x)\varphi_{2}(x)
 \qquad 
 \left(x\in \mathbb{F}\right). 
\]
Furthermore, 
\begin{enumerate}[(A)]
 \item either $f(1)=0$ and then $f$ is identically zero;
 \item or $f(1)=1$. 
\end{enumerate}
\end{prop}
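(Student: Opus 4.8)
The plan is to work directly with the bi-additive structure behind the quadratic function $f=a_1a_2$ and to translate equation \eqref{Eq1_simp} into a functional equation on the multiplicative group $\mathbb{F}^\times$. First I would observe that $f(x^2)=a_1(x^2)a_2(x^2)$ while $f(x)^2=a_1(x)^2a_2(x)^2$, so \eqref{Eq1_simp} reads $a_1(x^2)a_2(x^2)=\bigl(a_1(x)a_2(x)\bigr)^2$ for all $x\in\mathbb{F}$. Both sides are traces of symmetric bi-additive maps: the left-hand side comes from $(x,y)\mapsto a_1(xy)a_2(xy)$ (symmetric in $x,y$) and the right-hand side from the trace of a symmetric bi-additive map built from $a_1,a_2$ as in the preceding Proposition. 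Applying the Polarization Formula (Theorem~\ref{Thm_polarization}) — equivalently, replacing $x$ by $x+y$ and extracting the mixed term, using that $f$ is quadratic so $\Delta_x\Delta_y f^*$ recovers $2$ times the bi-additive part — I would obtain
\[
 a_1(xy)a_2(xy)=a_1(x)a_2(x)a_1(y)a_2(y)
 \qquad\left(x,y\in\mathbb{F}\right),
\]
after carefully matching the cross terms; the precise bookkeeping here is routine but must be done with the polarization identity rather than naive substitution, since $f(x+y)\ne f(x)+f(y)$.

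Next I would set $g=f|_{\mathbb{F}^\times}$ and read the displayed identity as $g(xy)=g(x)g(y)$ on the group $\mathbb{F}^\times$; that is, $g$ is an \emph{exponential} on $\mathbb{F}^\times$. At the same time $f=a_1a_2$ is, when restricted to $\mathbb{F}^\times$, a normal polynomial of degree at most two (it lies in the algebra generated by the additive functions $a_1,a_2$), hence a normal exponential polynomial on $\mathbb{F}^\times$ of degree $\le 2$. An exponential polynomial that is itself an exponential must be (up to the degenerate case) a pure exponential: a genuine polynomial factor of positive degree would force the variety of $g$ to be more than one-dimensional, contradicting that $g$ is multiplicative. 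So either $g\equiv 0$ — i.e. $f\equiv 0$ on $\mathbb{F}^\times$, and then $f(0)=0$ forces $f\equiv 0$, which is case (A) — or $g$ is a nonzero exponential $m\colon\mathbb{F}^\times\to\mathbb{C}$. In the latter case, by \cite[pages 93--94]{Sze91} (the Levi-Civita machinery already invoked in the preceding Proposition) the bi-additive relation $a_1(xy)a_2(xy)=a_1(x)a_2(x)a_1(y)a_2(y)$ pins down the possible shapes of $a_1,a_2$; combined with the additivity of $a_1,a_2$ on $(\mathbb{F},+)$ one gets that $a_1$ and $a_2$ are, up to scalars, homomorphisms $\varphi_1,\varphi_2\colon\mathbb{F}\to\mathbb{C}$, so $f(x)=c\,\varphi_1(x)\varphi_2(x)$ for some constant $c$.

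To finish I would evaluate at $x=1$: since $\varphi_i$ are homomorphisms, $\varphi_i(1)=1$, hence $f(1)=c$, and we may rewrite $f(x)=f(1)\varphi_1(x)\varphi_2(x)$. Plugging back into \eqref{Eq1_simp} with $x=1$ gives $f(1)=f(1)^2$, so $f(1)\in\{0,1\}$, which is exactly the dichotomy (A)/(B). The triviality exclusion: if one of the $\varphi_i$ were identically zero then $f\equiv 0$ and we are in (A); the statement's "non-trivial homomorphisms" in the case $f(1)=1$ is then automatic. The converse direction is immediate, since if $f(x)=f(1)\varphi_1(x)\varphi_2(x)$ with $f(1)\in\{0,1\}$ and $\varphi_i$ multiplicative then $f(x^2)=f(1)\varphi_1(x)^2\varphi_2(x)^2=f(1)^2\varphi_1(x)^2\varphi_2(x)^2=f(x)^2$.

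The main obstacle I anticipate is the middle step: rigorously deducing from "$f$ is a normal degree-$\le 2$ polynomial on $\mathbb{F}^\times$ \emph{and} multiplicative on $\mathbb{F}^\times$" that $f$ restricted to $\mathbb{F}^\times$ is a pure exponential, and then lifting the resulting descriptions of $a_1,a_2$ on the multiplicative group back to genuine field homomorphisms using additivity on $(\mathbb{F},+)$. This is precisely the point where the quadratic (rather than normal-monomial) hypothesis in the general problem would make life harder; here, because $f$ is already assumed to factor as $a_1a_2$ with $a_i$ additive, the Levi-Civita/exponential-polynomial classification from \cite[pages 93--94]{Sze91} applies cleanly, and the argument is a streamlined version of the proof of the preceding Proposition. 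Everything else — the polarization computation and the final evaluation at $1$ — is routine.
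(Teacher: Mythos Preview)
Your polarization step is wrong, and it is the heart of the argument. You assert that both sides of $a_1(x^2)a_2(x^2)=a_1(x)^2a_2(x)^2$ are traces of symmetric \emph{bi}-additive maps, with the left side coming from $(x,y)\mapsto a_1(xy)a_2(xy)$. But that map is not bi-additive: $a_1((x+x')y)a_2((x+x')y)$ expands with cross terms and is not $a_1(xy)a_2(xy)+a_1(x'y)a_2(x'y)$. In fact $x\mapsto f(x^2)$ and $x\mapsto f(x)^2$ are generalized monomials of degree \emph{four}, not two (cf.\ Lemma~\ref{lemma_6}), so the correct polarization is the four-variable one. If you carry that out --- this is exactly the paper's equation~\eqref{Eq1_simp_sym} --- and then specialize $u=x$, $v=y$, you obtain
\[
4f(xy)+a_1(x^2)a_2(y^2)+a_1(y^2)a_2(x^2)=4f(x)f(y)+a_1(x)^2a_2(y)^2+a_1(y)^2a_2(x)^2,
\]
which does \emph{not} reduce to $f(xy)=f(x)f(y)$ without already knowing that the $a_i$ are multiplicative. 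So the shortcut ``$f$ is multiplicative on $\mathbb{F}^\times$, and an exponential which is also a degree-$\le 2$ polynomial must be a pure exponential'' collapses: you never legitimately reach multiplicativity of $f$.

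The paper's proof avoids this by working with the full four-variable symmetrized identity and making substitutions $y=u=v=1$, then $u=v=1$, to peel off information about $a_1(1),a_2(1)$ and eventually derive the \emph{Levi--Civita equation} $a_1(xy)+a_2(xy)=a_1(x)a_1(y)+a_2(x)a_2(y)$ (not the product relation you aim for). It is this additive Levi--Civita equation, together with the case analysis on $a_1(1)a_2(1)$, that the machinery of \cite[pp.~93--94]{Sze91} actually applies to. Your second step --- passing from ``$a_1a_2$ multiplicative'' to ``$a_1,a_2$ are homomorphisms up to scalars'' --- is also not a standard Levi--Civita situation and would need its own argument even if the first step were sound. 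In short: the conclusion $f(xy)=f(x)f(y)$ is true a posteriori, but it is the \emph{output} of the classification, not an input you can obtain by polarization.
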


\begin{proof}
 Assume $\mathbb{F}$ be a field and $f\colon \mathbb{F}\to \mathbb{C}$ be a quadratic function that can be represented as 
 \[
  f(x)= a_{1}(x)a_{2}(x) 
  \qquad 
  \left(x\in \mathbb{F}\right)
 \]
 with the aid of the additive functions $a_{1}, a_{2}\colon \mathbb{F}\to \mathbb{C}$. 
 Then equation \eqref{Eq1_simp} in terms of the additive functions $a_{1}$ and $a_{2}$ is 
 \[
  a_{1}(x^{2})a_{2}(x^{2})= a_{1}(x)^{2}a_{2}(x)^{2} 
  \qquad 
  \left(x\in \mathbb{F}\right). 
 \]
Since both sides of this identity are traces of symmetric, $4$-additive functions, a simple application of the Polarization Formula leads to 
\begin{multline}\label{Eq1_simp_sym}
 a_{1}(xy)a_{2}(uv)+a_{1}(xu)a_{2}(yv)+a_{1}(xv)a_{2}(yu)
 \\
 +a_{1}(yu)a_{2}(xv)+a_{1}(yv)a_{2}(xu)+a_{1}(ux)a_{2}(xy)
 \\
 =
 a_{1}(x)a_{1}(y)a_{2}(u)a_{2}(v)+a_{1}(x)a_{1}(u)a_{2}(y)a_{2}(v)+a_{1}(x)a_{1}(v)a_{2}(y)a_{2}(u)
 \\
 +a_{1}(y)a_{1}(u)a_{2}(x)a_{2}(v)+a_{1}(y)a_{1}(v)a_{2}(x)a_{2}(u)+a_{1}(u)a_{1}(x)a_{2}(x)a_{2}(y)
 \\
 (x, y, u, v\in \mathbb{F}). 
\end{multline}
From this, with the substitution $x=y=u=v=1$, we immediately get that 
\[
 a_{1}(1)\,a_{2}(1)\,\left(a_{1}(1)\,a_{2}(1)-1\right)=0
\]
and with the substitution $y=u=v=1$, 
\[
 \left(a_{1}(1)\,a_{2}(1)-1\right)\,\left(a_{1}(1)\,a_{2}(x)+a_{2}(
 1)\,a_{1}(x)\right)=0
\]
can be deduced for all $x\in \mathbb{F}$. 
Assume first that $a_{1}(1)\,a_{2}(1)-1\neq 0$, then $a_{1}(1)=0$ or $a_{2}(1)=0$. If $a_{1}(1)=0$, then 
\begin{enumerate}[(A)]
 \item either $a_{2}(1)\neq 0$ and the above identity reduces to 
 \[
  a_{2}(1)a_{1}(x)=0 
  \qquad 
  \left(x\in \mathbb{F}\right). 
 \]
In other words $a_{1}$ is the identically zero function. In this case $f$ is identically zero, too. 
\item or $a_{2}(1)=0$ and from \eqref{Eq1_simp_sym} we get that 
\begin{multline*}
 a_{1}(1)\,a_{2}(x\,y)+a_{2}(1)\,a_{1}(x\,y)+\left(\left(2-2\,a_{1}(
 1)\,a_{2}(1)\right)\,a_{1}(x)-a_{1}(1)^2\,a_{2}(x)\right)\,a_{2}(y)
 \\
 +
 \left(\left(2-2\,a_{1}(1)\,a_{2}(1)\right)\,a_{2}(x)-a_{2}(1)^2\,a_{
 1}(x)\right)\,a_{1}(y)=0
\end{multline*}
for all $x, y\in \mathbb{F}$, that is, 
\[
 a_{1}(x)\,a_{2}(y)+a_{2}(x)\,a_{1}(y)=0 
 \qquad 
 \left(x, y\in \mathbb{F}\right). 
\]
From this, 
\[
 a_{1}(x)a_{2}(x)=0
\]
follows for all $x\in \mathbb{F}$, that is, $f$ is the identically zero function.
\end{enumerate}
Therefore, from now on $a_{1}(1)\,a_{2}(1)-1=0$ can be supposed. Note that without the loss of generality we can (and we do) assume that $a_{1}(1)=a_{2}(1)=1$. In this case equation 
\eqref{Eq1_simp_sym} implies that 
\[
 a_{2}(x\,y)+a_{1}(x\,y)=a_{2}(x)\,a_{2}(y)+a_{1}(x)\,a_{1}(y)
\]
for all $x, y\in \mathbb{F}$. From this we deduce that either 
\begin{enumerate}[(A)]
 \item $\left\{a_{1}, a_{2}\right\}$ is linearly dependent, that is 
 \[
  a_{2}=ca_{1}(x) 
  \qquad 
  \left(x\in \mathbb{F}\right), 
 \]
from which we get that $a_{1}\equiv a_{2}$, since $a_{1}(1)=a_{2}(1)$. If so, then equation \eqref{Eq1_simp} reduces to 
\[
 a(x^{2})^{2}= \kappa a(x)^{4} 
 \qquad 
 \left(x\in \mathbb{F}\right), 
\]
that is $a$ is a homomorphism and 
\[
 f(x)= f(1)a(x)^{2} 
 \qquad 
 \left(x\in \mathbb{F}\right). 
\]
\item or $\left\{a_{1}, a_{2}\right\}$ is linearly independent. Then due to \cite[pages 93-94]{Sze91}
\[
 a_{i}(x)= (\alpha_{i}l(x)+\beta_{i})m(x) 
 \qquad 
 \left(x\in \mathbb{F}^{\times}, i=1, 2\right)
\]
or 
\[
 a_{i}(x)= \alpha_{i}m_{1}(x)+\beta_{i}m_{2}(x) 
 \qquad 
 \left(x\in \mathbb{F}^{\times}, i=1, 2\right), 
\]
where $l\colon \mathbb{F}^{\times}\to \mathbb{C}$ is a logarithmic function, 
$m, m_{1}, m_{2}\colon \mathbb{F}^{\times}\to \mathbb{C}$ are multiplicative functions and 
$\alpha_{1}, \alpha_{2}$ and $\beta_{1}, \beta_{2}$ are complex constants. 

Substituting the first form into equation \eqref{Eq1_simp}, $\alpha_{1}=0$ and $\alpha_{2}=0$. That is, $a_{1}$ and $a_{2}$ are constant multiples of a (nonzero) homomorphism. This means that there exists linearly independent homomorphisms $\varphi_{1}, \varphi_{2}\colon \mathbb{F}\to \mathbb{C}$ such that 
\[
 f(x)= f(1)\varphi_{1}(x)\varphi_{2}(x) 
 \qquad 
 \left(x\in \mathbb{F}\right). 
\]
Finally, if we substitute the second form into \eqref{Eq1_simp} then we get (among others) that 
\[
 \begin{array}{rcl}
 \alpha_{1}\alpha_{2}(1-\alpha_{1}\alpha_{2})&=&0\\
  \beta_{1}\beta_{2}(1-\beta_{1}\beta_{2})&=&0
 \end{array}
\]
If we would have $\beta_{1}\beta_{2}=1$, then the remaining equations, that is, 
\[
 \begin{array}{rcl}
  \alpha_{1}\beta_{1}\beta_{2}^{2}+\alpha_{2}\beta_{1}^{2}\beta_{2}&=&0\\
  \alpha_{1}^{2}\alpha_{2}\beta_{2}+\alpha_{1}\beta_{2}^{2}\beta_{1}&=&0\\
  \alpha_{1}\beta_{2}+\beta_{1}\alpha_{2}-\alpha_{1}^{2}\beta_{2}^{2}-\beta_{1}^{2}\alpha_{2}-4\alpha_{1}\alpha_{2}\beta_{1}\beta_{2}&=&0
 \end{array}
\]
would lead to a contradiction. The same concerns the case $\alpha_{1}\alpha_{2}=1$. 

This means however that $\alpha_{1}\alpha_{2}=0$ and $\beta_{1}\beta_{2}=0$, that is, either $f$ is the 
identically zero function, or there exists homomorphisms $\varphi_{1}, \varphi_{2}\colon \mathbb{F}\to \mathbb{C}$ such that 
\[
 f(x)=f(1)\varphi_{1}(x)\varphi_{2}(x) 
 \qquad 
 \left(x\in \mathbb{F}\right). 
\]
\end{enumerate}
\end{proof}

\begin{rem}
 Roughly speaking the above statement says that among polynomials whose variety is at most two-dimensional, the solutions $f\colon \mathbb{F}\to \mathbb{C}$ of equation \eqref{Eq1_simp} are of the form 
 \[
  f(x)= f(1)\varphi_{1}(x)\varphi_{2}(x) 
  \qquad 
  \left(x\in \mathbb{F}\right), 
 \]
with appropriate homomorphisms $\varphi_{1}, \varphi_{2}\colon \mathbb{F}\to \mathbb{C}$. 

Let now $n$ be a fixed positive integer. 
In the general case, that is, if $f\colon \mathbb{F}\to \mathbb{C}$ is a quadratic function whose variety is at most $n$-dimensional, then we have 
\[
 f(x)=\sum_{p, q=1}^{n}a_{p}(x)a_{q}(x) 
 \qquad 
 \left(x\in \mathbb{F}\right), 
\]
with certain additive functions $a_{1}, \ldots, a_{n}\colon \mathbb{F}\to \mathbb{C}$. 
In this situation equation \eqref{Eq1_simp} can be investigated with an analogous argument as in the proof of Proposition \ref{Prop1}. 
\end{rem}

The starting point of study of equation \eqref{Eq1} among quadratic functions is the theorem below that can be found in \cite{GseKisVin19}. Here we also recall its proof to enlighten the way to the general case. 

\begin{thm}\label{THM_main}
 Let $\mathbb{F}$ be a field with $\mathrm{char}(\mathbb{F})=0$ and $f\colon \mathbb{F}\to \mathbb{C}$ be a quadratic function. 
 Then equation 
 \begin{equation}\label{Eq1}
  f(x^{2})= f(x)^{2}
 \end{equation}
holds for each $x\in \mathbb{F}$ \emph{if and only if} there exist non-trivial homomorphisms $\varphi_{1}, \varphi_{2}\colon \allowbreak \mathbb{F}\to \mathbb{C}$ such that 
\[
 f(x)= f(1)\cdot \varphi_{1}(x)\varphi_{2}(x)
 \qquad 
 \left(x\in \mathbb{F}\right). 
\]
Furthermore, 
\begin{enumerate}[(A)]
 \item either $f(1)=0$ and then $f$ is identically zero;
 \item or $f(1)=1$. 
\end{enumerate}

\end{thm}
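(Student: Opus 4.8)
The plan is to run the same scheme as in the proof of Proposition~\ref{Prop1}, the difference being that a general quadratic function need not be a normal polynomial, so instead of a representation $f=a_1a_2$ one uses the intrinsic one $f=A^{\ast}$, where $A\colon\mathbb{F}^{2}\to\mathbb{C}$ is a symmetric bi-additive map with trace $f$. The converse is immediate: if $f=f(1)\varphi_{1}\varphi_{2}$ with homomorphisms $\varphi_{1},\varphi_{2}$, then $f(x^{2})=f(1)\varphi_{1}(x)^{2}\varphi_{2}(x)^{2}$ while $f(x)^{2}=f(1)^{2}\varphi_{1}(x)^{2}\varphi_{2}(x)^{2}$, so \eqref{Eq1} holds exactly when $f(1)=f(1)^{2}$, which also produces the dichotomy (A)--(B). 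So it remains to handle the forward implication.

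First I would polarize. Writing $f=A^{\ast}$, the map $x\mapsto f(x^{2})$ is, by Lemma~\ref{lemma_6}, the trace of the symmetrization of $(x_{1},x_{2},x_{3},x_{4})\mapsto A(x_{1}x_{2},x_{3}x_{4})$, and $x\mapsto f(x)^{2}$ is the trace of the symmetrization of $(x_{1},x_{2},x_{3},x_{4})\mapsto A(x_{1},x_{2})A(x_{3},x_{4})$. Since $\mathrm{char}(\mathbb{F})=0$, multiplication by $4!$ is injective on $\mathbb{C}$, so by Lemma~\ref{mainfact} (equivalently, by applying $\Delta_{y_{1},y_{2},y_{3},y_{4}}$ and Theorem~\ref{Thm_polarization}) equation \eqref{Eq1} is equivalent to the identity
\[
A(x_{1}x_{2},x_{3}x_{4})+A(x_{1}x_{3},x_{2}x_{4})+A(x_{1}x_{4},x_{2}x_{3})
=A(x_{1},x_{2})A(x_{3},x_{4})+A(x_{1},x_{3})A(x_{2},x_{4})+A(x_{1},x_{4})A(x_{2},x_{3})
\]
for all $x_{1},x_{2},x_{3},x_{4}\in\mathbb{F}$. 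Set $c=A(1,1)=f(1)$ and $b(x)=A(x,1)$, an additive function. Specialising $x_{1}=\dots=x_{4}=1$ gives $c=c^{2}$; specialising $x_{2}=x_{3}=x_{4}=1$ gives $b(x)(1-c)=0$ for all $x$; and specialising $x_{3}=x_{4}=1$ gives $(2-c)A(x,y)=2b(x)b(y)-b(xy)$. Hence either $c=0$, in which case the second relation forces $b\equiv0$ and then the third forces $A\equiv0$, i.e.\ $f\equiv0$ (case (A)); or $c=1$, in which case $A(x,y)=2b(x)b(y)-b(xy)$, $f(x)=2b(x)^{2}-b(x^{2})$, and substituting this form of $A$ into the displayed identity with $x_{4}=1$ reduces, after a short computation, to
\[
b(xyz)=b(xy)b(z)+b(xz)b(y)+b(yz)b(x)-2b(x)b(y)b(z)\qquad(x,y,z\in\mathbb{F}).
\]

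From here on $c=1$. Introduce the symmetric bi-additive map $\psi(x,y)=b(xy)-b(x)b(y)$, so $\psi(x,1)=0$, $A=b\otimes b-\psi$, and the last display is equivalent to the twisted Leibniz rule $\psi(xy,z)=b(x)\psi(y,z)+b(y)\psi(x,z)$. The decisive step — the only one that uses the multiplicative structure of $\mathbb{F}$ essentially — is to evaluate $b$ on the element $(v_{1}v_{2})^{2}=v_{1}^{2}v_{2}^{2}$ in two ways, using $b(pq)=b(p)b(q)+\psi(p,q)$ and the twisted Leibniz rule; after all common terms cancel this leaves
\[
\psi(v_{1},v_{2})^{2}=\psi(v_{1},v_{1})\,\psi(v_{2},v_{2})\qquad(v_{1},v_{2}\in\mathbb{F}),
\]
that is, every symmetric $2\times2$ matrix $\bigl(\psi(v_{i},v_{j})\bigr)$ is singular, so $\psi$ has rank at most one: $\psi(x,y)=\lambda(x)\lambda(y)$ for a suitable additive $\lambda\colon\mathbb{F}\to\mathbb{C}$ (with $\lambda\equiv0$ if $\psi\equiv0$). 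Feeding $\psi=\lambda\otimes\lambda$ back into the twisted Leibniz rule gives $\bigl(\lambda(xy)-b(x)\lambda(y)-b(y)\lambda(x)\bigr)\lambda(z)=0$ for all $z$, hence $\lambda(xy)=b(x)\lambda(y)+b(y)\lambda(x)$. Setting $\varphi_{1}=b+\lambda$, $\varphi_{2}=b-\lambda$ we obtain additive maps with $\varphi_{i}(xy)=\varphi_{i}(x)\varphi_{i}(y)$ and $\varphi_{i}(1)=b(1)\pm\lambda(1)=1$ (since $\lambda(1)^{2}=\psi(1,1)=0$), i.e.\ two non-trivial homomorphisms, and
\[
f(x)=2b(x)^{2}-b(x^{2})=2b(x)^{2}-\bigl(b(x)^{2}+\lambda(x)^{2}\bigr)=b(x)^{2}-\lambda(x)^{2}=\varphi_{1}(x)\varphi_{2}(x)=f(1)\varphi_{1}(x)\varphi_{2}(x),
\]
since $f(1)=1$; this is case (B).

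The routine-but-laborious part is the passage from \eqref{Eq1} to the four-variable polarized identity and then, via the three specialisations, to the cubic identity for $b$, together with the bookkeeping showing it is equivalent to the twisted Leibniz rule. The one genuinely new idea, and the step I expect to be decisive, is the use of $(v_{1}v_{2})^{2}=v_{1}^{2}v_{2}^{2}$ to force $\psi$ to have rank at most one; once that is in hand the remainder of the argument is purely formal.
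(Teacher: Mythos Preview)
Your argument is correct. Up to and including the cubic identity
\[
b(xyz)=b(xy)b(z)+b(xz)b(y)+b(yz)b(x)-2b(x)b(y)b(z)
\]
you follow exactly the paper's route (the paper writes $a$ for your $b$ and packages the polarized identity as $F_{4}\equiv 0$, but the specializations at $1$ and the resulting dichotomy are identical). The genuine divergence comes after that. The paper fixes $z^{\ast}$, sets $A(x)=a(xz^{\ast})-a(z^{\ast})a(x)$, notes that $A(xy)=a(x)A(y)+a(y)A(x)$, and then invokes Theorem~12.2 of Sz\'ekelyhidi~\cite{Sze91} on convolution-type (Levi--Civita) equations on $\mathbb{F}^{\times}$ to conclude that $a=\tfrac{1}{2}(m_{1}+m_{2})$ for multiplicative $m_{1},m_{2}$. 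Your approach replaces this external appeal by an elementary self-contained step: the computation of $b((v_{1}v_{2})^{2})=b(v_{1}^{2}v_{2}^{2})$ via $b(pq)=b(p)b(q)+\psi(p,q)$ and the twisted Leibniz rule indeed collapses to $\psi(v_{1},v_{2})^{2}=\psi(v_{1},v_{1})\psi(v_{2},v_{2})$, and polarizing this in $v_{1}$ (replace $v_{1}$ by $v_{1}+t\,w$ with $t\in\mathbb{Q}$ and compare the $t$-coefficient) gives $\psi(v_{1},u)\psi(w,u)=\psi(v_{1},w)\psi(u,u)$, so fixing any $u$ with $\psi(u,u)\neq 0$ yields $\psi=\lambda\otimes\lambda$ with $\lambda(x)=\psi(x,u)/\sqrt{\psi(u,u)}$; the remaining verifications that $\varphi_{i}=b\pm\lambda$ are unital homomorphisms and that $f=\varphi_{1}\varphi_{2}$ are straightforward. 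What your route buys is that the proof becomes entirely internal---no spectral/exponential-polynomial machinery is needed---at the price of the one extra (but short) computation; what the paper's route buys is a template that visibly generalises to the Levi--Civita equations appearing in the higher-degree cases discussed later. The only place where your write-up could be tightened is the sentence ``so $\psi$ has rank at most one'': spell out the one-line polarization above, since the values of $\psi$ lie in $\mathbb{C}$ rather than in the scalar field of the domain, so the usual linear-algebra statement does not apply verbatim.
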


\begin{proof}
 Since $f$ is a generalized monomial of degree $2$, there exists a
symmetric bi-additive function $F_{2}\colon \mathbb{F}^{2}\allowbreak
\to \mathbb{C}$ so that
\[
 F_{2}(x, x)= f(x)
 \qquad
 \left(x\in \mathbb{F}\right).
\]
Define the symmetric $4$-additive mapping $F_{4}\colon
\mathbb{F}^{4}\to \mathbb{C}$ through
\begin{multline*}
 F_{4}(x_{1}, x_{2}, x_{3}, x_{4})
 =
 F_{2}(x_{1}x_{2}, x_{3}x_{4})
 +  F_{2}(x_{1}x_{3}, x_{2}x_{4})
 + F_{2}(x_{1}x_{4}, x_{2}x_{3})
 \\
 -F_{2}(x_{1}, x_{2})F(x_{3}, x_{4})
 -F_{2}(x_{1}, x_{3})F(x_{2}, x_{4})
 -F_{2}(x_{1}, x_{4})F(x_{2}, x_{3})
 \\
 \left(x_{1}, x_{2}, x_{3}, x_{4}\in \mathbb{F}\right).
\end{multline*}
Since
\[
 F_{4}(x, x, x, x)= 3 \left(F_{2}(x^{2}, x^{2})-F_{2}(x, x)^{2}\right)=3 \left(f(x^{2})-f(x)^{2}\right)=0
 \qquad
 \left(x\in \mathbb{F}\right),
\]
the mapping $F_{4}$ has to be identically zero on
$\mathbb{F}^{4}$. Therefore, especially
\[
0=F_{4}(1, 1, 1, 1)= 3F_{2}(1, 1)-3F_{2}(1, 1)^{2},
\]
yielding that either $F_{2}(1, 1)=0$ or $F_{2}(1, 1)=1$. Moreover,
\[
 0= F_{4}(x, 1, 1, 1) =
 3F_{2}\left(x , 1\right)-3F_{2}\left(1 , 1\right)F_{2}\left(x , 1\right)
 \qquad
 \left(x\in \mathbb{F}\right),
\]
from which either $F_{2}(1, 1)=1$ or $F_{2}(x, 1)=0$ follows for any $x\in
\mathbb{F}$.

Using that
\[
 0= F_{4}(x, x, 1, 1)
 =
 F_{2}(x^2 , 1)-F_{2}\left(1 , 1\right)\,F_{2}\left(x , x\right)+2F_{2} \left(x , x\right)-2F^2_{2}\left(x , 1\right)
 \qquad 
 \left(x\in \mathbb{F}\right),
\]
we obtain that
\[
 \left(F_{2}(1, 1)-2\right)F_{2}\left(x , x\right)= F_{2}(x^2 , 1)-2F^2_{2}\left(x , 1\right)
 \qquad
 \left(x\in \mathbb{F}\right).
\]
Now, if $F_{2}(1, 1)=0$, then according to the above identities $F_{2}(x,
1)=0 $ would follow for all $x\in \mathbb{F}$. Since $F_{4}(x,
x, 1, 1)=0$ is also fulfilled by any $x\in \mathbb{F}$, this
immediately implies that
\[
 -2f(x)=-2F_{2}(x, x)= F_{2}(x^{2}, 1)-F_{2}(x, 1)^{2}=0
 \qquad
 \left(x\in \mathbb{F}\right),
\]
i.e., $f$ is identically zero.

In case $F_{2}(1, 1)\neq 0$, then necessarily $F_{2}(1, 1)=1$ from which
\[
 -F_{2}(x, x)= F_{2}(x^{2}, 1)-2F_{2}(x, 1)^{2}
 \qquad
 \left(x\in \mathbb{F}\right).
\]
Define the non-identically zero additive function $a\colon
\mathbb{F}\to \mathbb{C}$ by
\[
 a(x)=F_{2}(x, 1)
 \qquad
 \left(x\in \mathbb{F}\right)
\]
to get that
\[
 f(x)=F_{2}(x, x)= -F_{2}(x^{2}, 1)+2F_{2}(x, 1)^{2}= 2a(x)^{2} -a(x^{2})
 \qquad
 \left(x\in \mathbb{F}\right).
\]
Since $F_{4}(x, x, x, x)=0$ has to hold, the additive function
$a\colon \mathbb{F}\to \mathbb{C}$ has to fulfill identity
\begin{equation}\label{Eq3}
 -a(x^4)+a^2(x^2)+4a^2(x)a(x^2)-4a^4(x)=0
 \qquad
 \left(x\in \mathbb{F}\right)
\end{equation}
too.

In what follows, we will show that the additive function $a$ is of a
rather special form.

Indeed,
\[
 0= F_{4}(x, y, z, 1)
 \qquad
 \left(x, y, z\in \mathbb{F}\right)
\]
means that $a$ has to fulfill equation
\[
 a(x)a(yz)+a(y)a(xz)+a(z)a(xy)= 2a(x)a(y)a(z)+a(xyz)
 \qquad
 \left(x, y, z\in \mathbb{F}\right)
\]
Let now $z^{\ast}\in \mathbb{F}$ be arbitrarily fixed to have
\[
 a(x)a(yz^{\ast})+a(y)a(xz^{\ast})+a(z^{\ast})a(xy)= 2a(x)a(y)a(z^{\ast})+a(xyz^{\ast})
 \qquad
 \left(x, y, z\in \mathbb{F}\right).
\]
Define the additive function $A\colon \mathbb{F}\to \mathbb{C}$ by
\[
 A(x)=a(xz^{\ast})-a(z^{\ast})a(x)
 \qquad
 \left(x\in \mathbb{F}\right)
\]
to receive that
\[
 A(xy)=a(x)A(y)+a(y)A(x)
 \qquad
 \left(x, y\in \mathbb{F}\right),
 \]
 which is a special convolution type functional equation.
Due to Theorem 12.2 of \cite{Sze91}, we get that
\begin{enumerate}[(a)]
 \item the function $A$ is identically zero under any choice of $z^{\ast}$, implying that $a$ has to be multiplicative.
 Note that $a$ is additive, too. Thus, for the quadratic mapping $f\colon \mathbb{F}\to \mathbb{C}$ there exists a
 homomorphism $\varphi\colon \mathbb{F}\to \mathbb{C}$ such that
 \[
  f(x)=\varphi(x)^{2}
  \qquad
  \left(x\in \mathbb{F}\right).
 \]

\item or there exists multiplicative functions $m_{1}, m_{2}\colon \mathbb{F}\to \mathbb{C}$
and a complex constant $\alpha$ such that
\[
 a(x)=\frac{m_{1}(x)+m_{2}(x)}{2}
 \qquad
 \left(x\in \mathbb{F}\right)
\]
and
\[
 A(x)=\alpha \left(m_{1}(x)-m_{2}(x)\right)
  \qquad
 \left(x\in \mathbb{F}\right).
\]
Due to the additivity of $a$, in view of the definition of the
mapping $A$, we get that $A$ is additive, too.

This however means that both the maps $m_{1}+m_{2}$ and
$m_{1}-m_{2}$ are additive, from which the additivity of $m_{1}$ and
$m_{2}$ follows, yielding that they are in fact homomorphisms.

Since
\[
 F_{2}(x, x)= f(x)= 2a(x)^{2}-a(x^{2})
 \qquad
 \left(x\in \mathbb{F}\right),
\]
we obtain for the quadratic function $f\colon \mathbb{F}\to
\mathbb{C}$ that there exist homomorphisms $\varphi_{1},
\varphi_{2}\colon \mathbb{F}\to \mathbb{C}$ such that
\[
 f(x)=\varphi_{1}(x)\varphi_{2}(x)
 \qquad
 \left(x\in \mathbb{F}\right).
\]
\end{enumerate}
Summing up, we received the following: identity
\[
 f(x^{2})= f(x)^{2}
 \qquad
 \left(x\in \mathbb{F}\right)
\]
holds for the quadratic function $f\colon \mathbb{F}\to \mathbb{C}$
if and only if there exist homomorphisms $\varphi_{1},
\varphi_{2}\colon \mathbb{F}\to \mathbb{C}$ such that
\[
 f(x)=f(1)\cdot\varphi_{1}(x)\varphi_{2}(x)
 \qquad
 \left(x\in \mathbb{F}\right).
\]
\end{proof}

\begin{cor}
 Let $n\geq 2$ be a positive integer, $\mathbb{F}$ be a field with $\mathrm{char}(\mathbb{F})=0$ and $f\colon \mathbb{F}\to \mathbb{C}$ be a quadratic function. 
 Then equation 
 \begin{equation}\label{Eq2}
  f(x^{n})= f(x)^{n}
 \end{equation}
holds for each $x\in \mathbb{F}$ \emph{if and only if} there exist  homomorphisms $\varphi_{1}, \varphi_{2}\colon \mathbb{F}\to \mathbb{C}$ such that 
\[
 f(x)= f(1)\cdot \varphi_{1}(x)\varphi_{2}(x) 
 \qquad 
 \left(x\in \mathbb{F}\right), 
\]
furthermore either $f(1)=0$ and then $f$ is identically zero, or $f(1)$ is an $(n-1)$\textsuperscript{st} root of unity. 
\end{cor}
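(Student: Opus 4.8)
The plan is to mirror, almost verbatim, the proof of Theorem~\ref{THM_main}. The ``if'' direction I would dispatch by direct substitution: if $f(x)= f(1)\,\varphi_{1}(x)\varphi_{2}(x)$ for homomorphisms $\varphi_{1},\varphi_{2}\colon \mathbb{F}\to\mathbb{C}$, then $f(x^{n})= f(1)\,\varphi_{1}(x)^{n}\varphi_{2}(x)^{n}$ while $f(x)^{n}= f(1)^{n}\,\varphi_{1}(x)^{n}\varphi_{2}(x)^{n}$, so \eqref{Eq2} holds for every $x\in\mathbb{F}$ precisely when $f(1)= f(1)^{n}$, that is, when $f(1)=0$ (and then $f\equiv 0$) or $f(1)$ is an $(n-1)$\textsuperscript{st} root of unity.

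For the converse I would first observe that the case $n=2$ is literally Theorem~\ref{THM_main}, so from now on $n\geq 3$. Write $f= F_{2}^{\ast}$ with $F_{2}\colon \mathbb{F}^{2}\to\mathbb{C}$ symmetric and bi-additive. By Lemma~\ref{lemma_6} the map $x\mapsto f(x^{n})$ is a generalized monomial of degree $2n$; concretely it is the trace of the symmetrization of $(x_{1},\dots,x_{2n})\mapsto F_{2}(x_{1}\cdots x_{n},\,x_{n+1}\cdots x_{2n})$. Likewise $x\mapsto f(x)^{n}$ is the trace of the symmetrization of $(x_{1},\dots,x_{2n})\mapsto F_{2}(x_{1},x_{2})\cdots F_{2}(x_{2n-1},x_{2n})$. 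Let $F_{2n}$ denote the difference of these two symmetric $2n$-additive maps; then \eqref{Eq2} says $F_{2n}^{\ast}\equiv 0$, and since $\mathrm{char}(\mathbb{C})=0$ (so multiplication by $(2n)!$ is injective on $\mathbb{C}$), Lemma~\ref{mainfact} gives $F_{2n}\equiv 0$. This is the exact analogue of the vanishing of $F_{4}$ in Theorem~\ref{THM_main}.

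I would then exploit $F_{2n}\equiv 0$ by substituting tuples with few non-unit entries. Evaluating at $(1,\dots,1)$ gives $f(1)= f(1)^{n}$, hence $f(1)=0$ or $f(1)^{n-1}=1$. If $f(1)=0$, then $F_{2}(1,1)=0$, and since $n\geq 3$ every monomial of the second symmetrization surviving the substitutions $(x,x,1,\dots,1)$ and $(x,y,1,\dots,1)$ carries a vanishing factor $F_{2}(1,1)$; a short computation then yields $f(x)= -\tfrac{n-1}{n}F_{2}(x^{2},1)$ and $F_{2}(x,y)= -\tfrac{n-1}{n}F_{2}(xy,1)$, and putting $y=1$ in the second relation forces $F_{2}(\cdot,1)\equiv 0$, hence $F_{2}\equiv 0$ and $f\equiv 0$. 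If instead $f(1)^{n-1}=1$, I would rescale, setting $g:= f(1)^{-1}f$: then $g$ is quadratic, $g(1)=1$, and $g(x^{n})= g(x)^{n}$ because $f(1)^{n-1}=1$. Writing $g= G_{2}^{\ast}$ and $b(x):= G_{2}(x,1)$ (an additive function with $b(1)=1$), the substitutions $(x,x,1,\dots,1)$ and $(x,y,1,\dots,1)$ should give, once the binomial coefficients are simplified, $g(x)= 2b(x)^{2}-b(x^{2})$ and $G_{2}(x,y)= 2b(x)b(y)-b(xy)$; feeding these into the substitution $(x,y,z,1,\dots,1)$ should produce the convolution-type identity
\[
 b(xy)b(z)+b(xz)b(y)+b(yz)b(x)= b(xyz)+2b(x)b(y)b(z)
 \qquad
 \left(x,y,z\in\mathbb{F}\right).
\]
This is precisely the equation satisfied by the additive function $a$ in the proof of Theorem~\ref{THM_main}, so from there that argument applies word for word (via Theorem 12.2 of \cite{Sze91}): either $b$ is a homomorphism or $b=\tfrac12(m_{1}+m_{2})$ with homomorphisms $m_{1},m_{2}$, and in both cases $g(x)= 2b(x)^{2}-b(x^{2})= \varphi_{1}(x)\varphi_{2}(x)$ for suitable homomorphisms $\varphi_{1},\varphi_{2}$. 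Undoing the scaling then gives $f(x)= f(1)\,\varphi_{1}(x)\varphi_{2}(x)$ with $f(1)$ an $(n-1)$\textsuperscript{st} root of unity.

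The one place where I expect genuine work is the coefficient bookkeeping in the previous paragraph. After each substitution the scalars in front of $b(x^{2})$, $b(x)^{2}$, $b(xyz)$, $b(x)b(y)b(z)$ and of the mixed terms depend on $n$ through ratios such as $\binom{2n-2}{n-1}\big/\binom{2n}{n}= \tfrac{n}{2(2n-1)}$, $\binom{2n-3}{n-3}\big/\binom{2n-2}{n-1}= \tfrac{n-2}{2n}$ and $(2n-3)!!\big/(2n-1)!!= \tfrac{1}{2n-1}$, and the whole point is that these conspire to collapse to \emph{exactly} the $n=2$ relations displayed above; once this collapse is checked, no idea beyond those already present in the proof of Theorem~\ref{THM_main} is needed.
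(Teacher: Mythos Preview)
Your proposal is correct and follows essentially the same strategy as the paper: polarize equation~\eqref{Eq2} to a symmetric $2n$-additive identity, evaluate at tuples with few non-unit entries to express $f$ (resp.\ $g$) through the additive function $a(x)=F_{2}(x,1)$, and then land on the convolution-type equation from the proof of Theorem~\ref{THM_main}. Your route is in fact slightly cleaner---by rescaling to $g(1)=1$ and reading the identity off the substitution $(x,y,z,1,\dots,1)$ directly, you avoid the paper's intermediate step of plugging the representation of $f$ back into \eqref{Eq2} and re-symmetrizing---but the underlying argument is the same.
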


\begin{proof}
 In case $n=2$, then in view of the previous theorem, there is nothing to prove. 
 Thus $n>2$ can be supposed subsequently. 
 
Let $\mathbb{F}$ be a field with $\mathrm{char}(\mathbb{F})=0$, $\varphi_{1}, \varphi_{2}\colon \mathbb{F}\to \mathbb{C}$ be homomorphisms and 
$\lambda\in \mathbb{C}$ be such that either $\lambda$ is zero, or it is an $(n-1)$\textsuperscript{st} root of unity. 
Define the function $f\colon \mathbb{F}\to \mathbb{C}$ by 
\[
f(x)= \lambda \varphi_{1}(x)\varphi_{2}(x) 
\qquad 
\left(x\in \mathbb{F}\right). 
\]
Since every homomorphism is additive, Lemma \ref{lemma3} immediately yields that $f$ is a quadratic function and we also have 
\[
 f(x^{n})= \lambda\varphi_{1}(x^{n})\varphi_{2}(x^{n})= \lambda \varphi_{1}(x)^{n}\varphi_{2}(x)^{n}
 = \lambda^{n}\varphi_{1}(x)^{n}\varphi_{2}(x)^{n}=  \left(\lambda \varphi_{1}(x)\varphi_{2}(x)\right)^{n}
 =f(x)^{n}
\]
for each $x\in \mathbb{F}$, since $\lambda= \lambda^{n}$. 
  
Conversely, let $f\colon \mathbb{F}\to \mathbb{C}$ be a quadratic function such that we additionally have that 
  \[
   f(x^{n})= f(x)^{n}
  \]
for all $x\in \mathbb{F}$. Since $f$ is a quadratic function, there exists a symmetric, bi-additive function $F_{2}\colon \mathbb{F}^{2}\to \mathbb{C}$ such that 
\[
 f(x)= F_{2}(x, x) 
 \qquad 
 \left(x\in \mathbb{F}\right). 
\]
Equation \eqref{Eq2} with $x=1$ immediately yields that 
\[
 f(1)=f(1)^{n}, 
\]
that is, $F(1, 1)= f(1)$ is either zero, or it is an $(n-1)$\textsuperscript{st} root of unity. 

Furthermore, equation \eqref{Eq2} in terms of the function $F_{2}$ is 
\[
 F_{2}(x^{n}, x^{n})= F_{2}(x, x)^{n} 
 \qquad 
 \left(x\in \mathbb{F}\right). 
\]
Observe that both the sides of the above identity are traces of symmetric and $2n$-additive functions, namely we have 
\[
 \frac{1}{(2n)!}\sum_{\sigma\in \mathscr{S}_{2n}}F_{2}(x_{\sigma(1)}\cdots x_{\sigma(n)}, x_{\sigma(n+1)} \cdots x_{\sigma(2n)})
 =
 \frac{1}{(2n!)} \sum_{\sigma \in \mathscr{S}_{2n}}F_{2}(x_{\sigma(1)}, x_{\sigma(2)})\cdots F_{2}(x_{\sigma(2n-1)}, x_{\sigma(2n)})
\]
for all $x_{1}, \ldots, x_{2n}\in \mathbb{F}$. 
This identity with the substitution 
\[
 x_{1}= x, \; x_{2}=x \quad x_{i}=1 \quad \text{for } i=3, \ldots, 2n
\]
yields that there are complex constants $\alpha$ and $\beta$ depending only on $F_{2}(1, 1)$ such that 
\[
 f(x)=F_{2}(x, x)= \alpha a(x^{2})+\beta a(x)^{2} 
 \qquad 
 \left(x\in \mathbb{F}\right), 
\]
where the additive function $a\colon \mathbb{F}\to \mathbb{C}$ is defined by 
\[
 a(x)= F_{2}(x, 1) 
 \qquad 
 \left(x\in \mathbb{F}\right). 
\]
Writing this form back into equation \eqref{Eq2}, we deduce 
\[
 \alpha a(x^{2n})+\beta a(x^{n})^{2}= \left(\alpha a(x^{2})+\beta a(x)^{2}\right)^{n} 
 \qquad 
 \left(x\in \mathbb{F}\right). 
\]
Again, both the sides of this identity are traces of symmetric and $2n$-additive functions, therefore 
\begin{multline*}
 \frac{1}{(2n)!} \sum_{\sigma\in \mathscr{S}_{2n}} \left[\alpha a(x_{\sigma(1)} \cdots x_{\sigma_{(2n)}})+\beta a(x_{\sigma_{(1)}} \cdots x_{\sigma_{(n)}})a(x_{\sigma(n+1)} \cdots x_{\sigma(n)})\right]
\\= 
\frac{1}{(2n)!} \sum_{\sigma\in \mathscr{S}_{2n}} \prod_{k=0}^{n-1}(\alpha a(x_{\sigma(2k+1)}x_{\sigma(2k+2)})+\beta a(x_{\sigma(2k+1)})a(x_{\sigma(2k+2)}))
\end{multline*}
for each $x_{1}, \ldots, x_{2n}\in \mathbb{F}$. Let now $x, y, z\in \mathbb{F}$ be arbitrary, then this identity with the substitutions 
\[
 x_{1}=x, \, x_{2}=y,\, x_{3}=z \quad  \text{and} \quad x_{i}=0 \quad \text{for} \quad 4\leq i \leq 2n
\]
leads to 
\[
 Aa(xyz)+Ba(xy)a(z)+Ca(xz)a(y)
 +Da(yz)a(x)+Ea(x)a(y)a(z)=0 
 \qquad 
 \left(x, y, z\in \mathbb{F}\right), 
\]
that is a similar equation that appear in the proof of Theorem \ref{THM_main}. With an analogous thread we get that $a$ can be written as a sum of two homomorphisms that finally implies for the function $f$ that there are homomorphisms $\varphi_{1}, \varphi_{2}\colon \mathbb{F}\to \mathbb{C}$ such that 
\[
 f(x)= f(1)\varphi_{1}(x)\varphi_{2}(x) 
 \qquad 
 \left(x\in \mathbb{F}\right), 
\]
where $f(1)$ is either zero, or it is an $(n-1)$\textsuperscript{st} root of unity. 
\end{proof}

\section{Open problems and further perspectives}

As it is written at the beginning of the second section, the main aim of this paper was to 
prove characterization theorems for generalized polynomials $f\colon \mathbb{F}\to \mathbb{C}$ of degree at most $n$ that also fulfill equation 
\[
 f(P(x))= Q(f(x))
\]
for each $x\in \mathbb{F}$, 
where $n$ is a positive integer and $P\in \mathbb{F}[x]$ and $Q\in \mathbb{C}[x]$ are polynomials. 
The results presented in connection with this problem can be considered as initial steps. Thus we close this paper with several open questions and we would like to give some perspectives, too. 

\begin{rem}
 Clearly, it is enough to consider the case $\deg (P)=\deg(Q)$. Indeed, if 
 $f\colon \mathbb{F}\to \mathbb{C}$ is a generalized monomial of degree $n$, the due to Lemma \ref{lemma_6}, the mappings 
 \[
  \mathbb{F}\ni x \longmapsto f(P(x)) 
  \quad 
  \text{and}
  \quad 
  \mathbb{F}\ni x \longmapsto Q(f(x)) 
 \]
are generalized polynomials of degree $n\cdot \deg(P)$ and $n\cdot \deg(Q)$, respectively. Furthermore, 
they coincide at each point $x\in \mathbb{F}$. However, this is only possible if $\deg (P)=\deg(Q)$. 
\end{rem}

In the second section, we studied only quadratic functions, i.e., generalized monomials of degree two. Therefore, we formulate the following. 

\begin{opp}
Let  $n\in \mathbb{N}$, $n\geq 2$ and $P\in \mathbb{F}[x]$ and $Q\in \mathbb{C}[x]$ be polynomials of degree at least two and $f\colon \mathbb{F}\to \mathbb{C}$ be a generalized monomial of degree $n$. Prove or disprove that if 
\[
 f(P(x))= Q(f(x)) 
 \qquad 
 \left(x\in \mathbb{F}\right), 
\]
then and only then there exist homomorphisms $\varphi_{1}, \ldots, \varphi_{n}\colon \mathbb{F}\to \mathbb{C}$ such that 
\[
 f(x)= f(1)\cdot \varphi_{1}(x)\cdots \varphi_{n}(x)
 \qquad 
 \left(x\in \mathbb{F}\right). 
\]

\end{opp}

\begin{opp}
 It might be promising to consider firstly the case 
 \[
  P(x)= Q(x)=x^{2} 
  \qquad 
  \left(x\in \mathbb{F}\right), 
 \]
 because as the results of the previous section show, hopefully, the case 
 \[
  P(x)= Q(x)=x^{n} 
  \qquad 
  \left(x\in \mathbb{F}\right)
 \]
 where $n>2$, leads back to the case $n=2$. 
\end{opp}

\begin{opp}
 In the special case we considered the above general problem, it turned our that the solutions of the functional equations are always (normal) monomials. Additionally, we also showed that the proof is much easier if we know this, see Proposition \ref{Prop1}. We conjecture that this is true in general, too. Thus, prove or disprove that if the generalized monomial $f\colon \mathbb{F}\to \mathbb{C}$ solves equation 
 \[
 f(P(x))= Q(f(x)) 
 \qquad 
 \left(x\in \mathbb{F}\right), 
\]
then $f$ is a monomial. 
\end{opp}

\begin{rem}
 The above problem is clearly meaningful for polynomials $P$ and $Q$ with degree one. At the same time, in this case we cannot expect representations similar to that ones that appeared in the statements proved in the second section. Here we consider only the case quadratic functions. Accordingly, assume that for the quadratic function $f\colon \mathbb{F}\to \mathbb{C}$ equation 
 \[
  f(ax+b)= Af(x)+B 
  \qquad 
  \left(x\in \mathbb{F}\right), 
 \]
with some $a, b\in \mathbb{F}$ and $A, B\in \mathbb{C}$. Let us denote $F\colon \mathbb{F}^{2}\to \mathbb{C}$ the uniquely determined symmetric, bi-additive function whose trace is $f$. With $x=0$ we immediately get that $f(b)=F(b, b)=B$. 
Furthermore, 
\[
 F(ax+b, ax+b)=AF(x, x)+F(b, b)
 \qquad 
 \left(x, \in \mathbb{F}\right), 
\]
that is, 
\[
 F(ax, ax)+2F(ax, b)+F(b, b)= AF(x, x)+F(b, b)
 \qquad 
 \left(x, \in \mathbb{F}\right), 
\]
or after some simplification 
\[
 \left(F(ax, ax)-AF(x, x)\right)+2F(ax, b)= 0
\]
for all $x\in \mathbb{F}$. Since the left hand side of this equation is a generalized polynomial of degree two which has to be identically zero, all of its monomial terms should vanish. 
This means from one hand that 
\[
 F(ax, b)=0
 \qquad 
 \left(x\in \mathbb{F}\right), 
\]
especially, $f(b, b)=B=0$. On the other hand, we also have 
\[
 F(ax, ax)=AF(x, x)
 \qquad 
 \left(x\in \mathbb{F}\right), 
\]
from this however 
\[
 F(ax, ay)=AF(x, y)
 \qquad 
 \left(x, y\in \mathbb{F}\right), 
\]
follows, that is, the symmetric, bi-additive function is \emph{semi-homogeneous}. 
From \cite[Theorem 3]{GseKisVin20} it is known that a non-identically zero, symmetric and bi-additive function 
$F$ fulfilling the above semi-homogeneity exists if and only if there are injective homomorphism 
$\delta_{1}, \delta_{2}\colon \mathbb{F}\to \mathbb{C}$ such that 
\[
 \delta_{1}(a)\delta_{2}(a)= A. 
\]
Summing up, if $f\colon \mathbb{F}\to \mathbb{C}$ is a non-identically zero quadratic function such that 
\[
  f(ax+b)= Af(x)+B 
  \qquad 
  \left(x\in \mathbb{F}\right), 
 \]
with some $a, b\in \mathbb{F}$ and $A, B\in \mathbb{C}$, then 
\begin{enumerate}[(i)]
 \item $B=0$
 \item for the uniquely determined symmetric and bi-additive function $F\colon \mathbb{F}^{2}\to \mathbb{C}$, identity
 \[
  F(ax, b)=0
 \]
is satisfied for all $x\in \mathbb{F}$. 
\item there are injective homomorphism 
$\delta_{1}, \delta_{2}\colon \mathbb{F}\to \mathbb{C}$ such that 
\[
 \delta_{1}(a)\delta_{2}(a)= A. 
\]
\end{enumerate}

Observe that for instance with $b=0$, with arbitrary fixed $a\in \mathbb{Q}$ and with 
$A=a^{2}$ the above identity is fulfilled by \emph{any} quadratic function $f\colon \mathbb{F}\to \mathbb{C}$ (this is obviously consistent with the fact that quadratic functions are $\mathbb{Q}$-homogeneous of degree two). This shows that in this case we do not get in general any information for the form of the involved quadratic function $f$. With an analogous method, we obtain the same for higher order generalized monomials. 
\end{rem}

\begin{opp}
 In this paper only equations with one unknown function were considered. At the same time, the investigated problem can clearly be extended: let 
 $P\in \mathbb{F}[x]$ and $Q\in \mathbb{C}[x]$ be polynomials, $f, g\colon \mathbb{F}\to \mathbb{C}$ be generalized polynomials (of possibly \emph{different order}) 
 such that $\deg(f)\deg(P)=\deg(g)\deg(Q)$. Prove or disprove that if 
 \[
  f(P(x))= Q(g(x)) 
 \]
holds for all $x\in \mathbb{F}$, then $f$ and $g$ can be represented as products of homomorphisms. 
\end{opp}

\begin{rem}
 A particularly interesting and presumably the simplest case of the following problem is when $\deg(g)=1$, i.e., when $g\colon \mathbb{F}\to \mathbb{C}$ is an additive function. 
 \end{rem}
 
 In connection to this problem we prove the following special case, which is expected to be successfully applied in the general case as well. 
 
 \begin{prop}
  Let $\mathbb{F}$ be a field with $\mathrm{char}(\mathbb{F})=0$, $f\colon \mathbb{F}\to \mathbb{C}$ be a quadratic function and $a\colon \mathbb{F}\to \mathbb{C}$ be an additive function. Then equation 
  \[
  f(x^{2})= a(x)^{4} 
  \qquad 
  \left(x\in \mathbb{F}\right)
 \]
 holds if and only if here exists a homomorphism $\varphi\colon \mathbb{F}\to \mathbb{C}$ such that 
\[
 a(x)=a(1)\varphi(x) 
 \qquad 
 \left(x\in \mathbb{F}\right)
\]
and 
\[
 f(x)= a(1)^{4}\varphi(x)^{2}
 \qquad 
 \left(x\in \mathbb{F}\right). 
\]
\end{prop}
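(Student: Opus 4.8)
The plan is to mimic the strategy used in the proof of Theorem~\ref{THM_main}, adapting it to the right-hand side $a(x)^4$ in place of $f(x)^2$. First I would introduce the symmetric bi-additive function $F_2\colon \mathbb{F}^2\to\mathbb{C}$ with trace $f$, so that the hypothesis reads $F_2(x^2,x^2)=a(x)^4$ for all $x\in\mathbb{F}$. Both sides are traces of symmetric $4$-additive functions, so by the Polarization Formula (Theorem~\ref{Thm_polarization}) the identity is equivalent to the fully polarized version obtained by applying $\Delta_{y_1}\Delta_{y_2}\Delta_{y_3}\Delta_{y_4}$ to both sides; concretely, writing $G_4$ for the symmetrization of $(x_1,x_2,x_3,x_4)\mapsto F_2(x_1x_2,x_3x_4)$ and $H_4$ for the symmetrization of $(x_1,x_2,x_3,x_4)\mapsto a(x_1)a(x_2)a(x_3)a(x_4)$, the equation says $G_4\equiv H_4$ on $\mathbb{F}^4$. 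I would then feed in well-chosen substitutions with entries from $\{x,y,z,1\}$ and $0$, exactly as in the model proof.

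The key steps, in order: (1) Plug $x_1=x_2=x_3=x_4=1$ to relate $F_2(1,1)$ and $a(1)$; plug $x_1=x,\ x_2=x_3=x_4=1$ to get an identity of the form $F_2(x,1)=(\text{const})\,a(1)^3 a(x)$, so that $F_2(\cdot,1)$ is a constant multiple of $a$. (2) Use $x_1=x_2=x,\ x_3=x_4=1$ to express $f(x)=F_2(x,x)$ as a linear combination $\alpha\,a(x^2)+\beta\,a(x)^2$ with constants depending only on $a(1)$ (this is the analogue of $f(x)=2a(x)^2-a(x^2)$ in Theorem~\ref{THM_main}). (3) Substitute this form back and use the three-variable substitution $x_1=x,\ x_2=y,\ x_3=z$, remaining entries $0$, to derive a functional equation for $a$ of convolution type $A(xy)=a(x)A(y)+a(y)A(x)$ after freezing one variable; invoke Theorem~12.2 of \cite{Sze91} exactly as before to conclude that $a$ is a homomorphism or a sum of two homomorphisms. (4) Rule out the genuinely two-dimensional case: if $a=\tfrac12(m_1+m_2)$ with $m_1,m_2$ linearly independent homomorphisms, substitute into $a(x)^4$ and into the expression $\alpha a(x^2)+\beta a(x)^2$ for $f$, expand in the basis of distinct products $m_1^i m_2^j$ (linearly independent by Lemma~\ref{L_lin_dep}), and compare coefficients to force $m_1=m_2$, i.e.\ $a=a(1)\varphi$ for a single homomorphism $\varphi$. (5) Finally substitute $a(x)=a(1)\varphi(x)$ back: $f(x^2)=a(1)^4\varphi(x)^4=\bigl(a(1)^2\varphi(x)^2\bigr)^2$, and since $f$ is quadratic with $F_2(\cdot,1)$ proportional to $\varphi$, conclude $f(x)=a(1)^4\varphi(x)^2$ (checking $f(1)=a(1)^4$ is consistent). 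The converse direction is the trivial verification that this pair solves the equation.

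The main obstacle I anticipate is step (4): the coefficient bookkeeping when $a$ is a sum of two independent homomorphisms. Unlike in Theorem~\ref{THM_main}, the right-hand side is a \emph{fourth} power $a(x)^4=\tfrac{1}{16}(m_1(x)+m_2(x))^4$, which spreads across five monomials $m_1^4,\ m_1^3m_2,\ m_1^2m_2^2,\ m_1m_2^3,\ m_2^4$, while the left-hand side $f(x^2)$, once $f$ is written via $a$, produces terms like $a(x^2)=\tfrac12(m_1(x)^2+m_2(x)^2)$ and $a(x)^2$; matching all coefficients and showing the only consistent solution collapses to $m_1=m_2$ will require care, and one must also double-check that the degenerate sub-cases $a(1)=0$ (forcing $a\equiv 0$ and $f\equiv 0$, which fits the stated conclusion with $\varphi$ arbitrary or $f\equiv 0$) are handled cleanly. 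A secondary subtlety is that Theorem~12.2 of \cite{Sze91} is applied on the multiplicative semigroup $\mathbb{F}^\times$, so one must argue, as in the earlier proofs, that additivity of $a$ on $(\mathbb{F},+)$ upgrades the multiplicative functions $m_i$ to genuine field homomorphisms.
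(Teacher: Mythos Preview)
Your overall strategy matches the paper's: polarize the identity $F_2(x^2,x^2)=a(x)^4$, read off $F_2(1,1)=a(1)^4$ and $F_2(x,1)=a(1)^3 a(x)$, deduce the explicit formula $f(x)=\tfrac{3}{2}a(1)^2 a(x)^2-\tfrac{1}{2}a(1)^3 a(x^2)$, and substitute back to obtain a single functional equation for the additive map $a$.

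There is one slip in your step (3): the substitution ``remaining entries $0$'' is vacuous here. After polarizing the degree-$4$ identity you only have four slots, and setting $x_4=0$ annihilates every term on both sides (each summand contains either a factor $a(0)$ or an argument $x_i\cdot 0$). You should set $x_4=1$, exactly as in the $F_4(x,y,z,1)=0$ step of Theorem~\ref{THM_main}; that is what produces the three-variable identity
\[
\alpha\, a(xyz)+\tfrac{\beta}{3}\bigl(a(xy)a(z)+a(xz)a(y)+a(yz)a(x)\bigr)=a(1)\,a(x)a(y)a(z),
\]
from which the convolution-type equation follows after freezing one variable.

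Where you genuinely diverge from the paper is in the endgame. Having reached the equation $a(x)^4=\tfrac{3}{2}a(1)^2 a(x^2)^2-\tfrac{1}{2}a(1)^3 a(x^4)$, the paper simply invokes symmetrization or Theorem~15 of \cite{GseKisVin19} to conclude at once that $a=a(1)\varphi$ for a single homomorphism $\varphi$. Your plan instead retraces the Levi--Civita route of Theorem~\ref{THM_main} (Theorem~12.2 of \cite{Sze91}), which a priori yields $a$ as a sum of two homomorphisms, and then eliminates the two-dimensional case by expanding in the basis $m_1^i m_2^j$ and comparing coefficients. This is correct (already the $m_1^4$-coefficient gives $\tfrac{1}{16}\neq\tfrac{1}{8}$ when $a(1)=1$), and has the virtue of being self-contained, at the cost of the bookkeeping you anticipated. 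Either route is fine; the paper's is shorter because it outsources the last step.
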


\begin{proof} 
 Assume that $f\colon \mathbb{F}\to \mathbb{C}$ is a quadratic, while $a\colon \mathbb{F}\to\mathbb{C}$ is an additive function such that 
 \[
  f(x^{2})= a(x)^{4} 
  \qquad 
  \left(x\in \mathbb{F}\right). 
 \]
Since both the sides of this equation are traces of symmetric, $4$-additive functions, we obtain that 
\[
 \frac{1}{3}\left[F(x_1 x_2, x_3 x_4)+F(x_1 x_3, x_2 x_4)+F(x_1 x_4, x_2 x_3)\right]
 = a(x_1)a(x_2)a(x_3)a(x_4) 
 \qquad
 \left(x_1, x_2, x_3, x_4\in \mathbb{F}\right). 
\]
Here $F\colon \mathbb{F}\times \mathbb{F}\to \mathbb{C}$ is the uniquely determined symmetric, bi-additive function for which $F(x, x)= f(x)$ is satisfied for all $x\in \mathbb{F}$. 

This identity implies especially that 
\[
 f(1)=F(1, 1)=a(1)^{4} 
 \qquad 
 \text{and}
 \qquad 
 F(x, 1)=a(1)^{3}a(x) 
 \qquad 
 \left(x\in \mathbb{F}\right). 
\]
Furthermore, we also have 
\[
 2F(x, y)= 3a(1)^{2}a(x)a(y)-F(xy, 1) 
 \qquad 
 \left(x, y\in \mathbb{F}\right). 
\]
Thus, 
\[
 2F(x, y)=3a(1)^{2}a(x)a(y)-a(1)^{3}a(xy) 
 \qquad 
 \left(x\in \mathbb{F}\right). 
\]
In other words, 
\[
 f(x)= \frac{3}{2}a(1)^{2}a(x)^{2}-\frac{1}{2}a(1)^{3}a(x^{2})  
 \qquad 
 \left(x\in \mathbb{F}\right). 
\]
Substituting this back into the original equation we get that the additive function $a\colon \mathbb{F}\to \mathbb{C}$ has to fulfill 
\[
 a(x)^{4}= \frac{3}{2}a(1)^{2}a(x^{2})^{2}-\frac{1}{2}a(1)^{3}a(x^{4})  
 \qquad 
 \left(x\in \mathbb{F}\right). 
\]
Again, after symmetrization or after applying \cite[Theorem 15]{GseKisVin19}, we derive that there exists a homomorphism 
$\varphi\colon \mathbb{F}\to \mathbb{C}$ such that 
\[
 a(x)=a(1)\varphi(x) 
 \qquad 
 \left(x\in \mathbb{F}\right)
\]
and 
\begin{multline*}
 f(x)= \frac{3}{2}a(1)^{2}a(x)^{2}-\frac{1}{2}a(1)^{3}a(x^{2})
 \\
 = 
 \frac{3}{2}a(1)^{2}a(1)^{2}\varphi(x)^{2}-\frac{1}{2}a(1)^{3}a(1)\varphi(x^{2})
 =
 a(1)^{4}\varphi(x)^{2} 
 \qquad 
 \left(x\in \mathbb{F}\right). 
\end{multline*}
\end{proof}

\begin{rem}
Using the ideas of the results of the third section, the case 
\[
 f(x^{n})= a(x)^{2n} 
 \qquad 
 \left(x\in \mathbb{F}\right)
\]
where $n$ is a fixed positive integer, can be reduced to the above studied case. 
\end{rem}

\begin{rem}
 Observe that during the proof of Lemmas \ref{lemma3}, \ref{lemma5}, \ref{lemma_6} the fact that we considered complex-valued mappings, was not used at all. We remark that these statements as well as their proofs are exactly the same for functions defined on a field $\mathbb{F}$ and mapping to another field $\mathbb{K}$. 
 
 The situation is slightly different in case of Lemma \ref{lemma4}, since in that case derivations are involved. Nevertheless, Lemma \ref{lemma4} also holds true (with an unchanged proof) for mappings defined on $\mathbb{F}$ and having values in $\mathbb{K}$, where $\mathbb{F}\subset \mathbb{K}$ are fields. 
\end{rem}

\begin{rem}
 We would also like to clarify why we considered only complex-valued functions. Obviously, the investigated problems are meaningful in a much more general setting. The importance of this condition lies in our method. Namely, in each case we showed firstly that the involved mappings $f\colon \mathbb{F}\to \mathbb{C}$ are exponential polynomials of the multiplicative group $\mathbb{F}^{\times}$. This enabled us to describe the unknown function completely. This method however relies on the notion of exponential polynomials and the theory of this notion is well-developed only for complex-valued mappings. Maybe with a different approach the general case can also be handled. 
 
 The assumption that the field $\mathbb{F}$ has to have zero characteristic is caused by the limitations of the Polarization formula, since $n$-additive functions are uniquely determined by their diagonalizations only if the characteristic of the domain is large enough or zero. 
\end{rem}

\begin{opp}
 Let $\mathbb{F}$ and $\mathbb{K}$ be fields, $n$ be a positive integer, $P\in \mathbb{F}[x]$ and $Q\in \mathbb{K}[x]$ be polynomials. Determine those generalized monomials $f, g\colon \mathbb{F}\to \mathbb{K}$ of degree at most $n$ that also fulfill equation 
\[
 f(P(x))= Q(f(x))
\]
for each $x\in \mathbb{F}$. 

A particularly interesting case of this problem is when at least one of the fields $\mathbb{F}$ and $\mathbb{K}$ has finite characteristic. 
\end{opp}

\begin{ackn}
 The author would like to thank Professor \textsc{José María Almira} (University of Murcia, Spain) and Professor \textsc{Rezső Lovas} (University of Debrecen, Hungary) for their generous help and valuable comments that improved the quality of the paper. 
 
 Project No.~K134191 has been implemented with the support provided by the National Research, Development and Innovation Fund of Hungary, financed under the K\_20 funding scheme. The research of the author has partially been carried out with the help of the Project 2019-2.1.11-TÉT-2019-00049, which has been implemented with the support provided from the National Research, Development and Innovation Fund of Hungary, financed under the TÉT funding scheme.
\end{ackn}

 \bibliographystyle{plain}

 \end{document}